\definecolor{mygray}{gray}{0.92}
\newcolumntype{C}[1]{>{\centering\arraybackslash$}p{#1}<{$}}
\newcounter{myequation}[equation]
\numberwithin{equation}{section}
\theoremstyle{plain}
\newtheorem{theorem}[equation]{Theorem}
\newtheorem{proposition}[equation]{Proposition}
\newtheorem{lemma}[equation]{Lemma}
\theoremstyle{definition}
\newtheorem{definition}[equation]{Definition}
\newtheorem{thealgo}[equation]{Algorithm}
\theoremstyle{remark}
\newtheorem{remark}[equation]{Remark}
\newtheorem{example}[equation]{Example}
\def\defi{\emph}
\def\ext{\!\mid\!}
\def\epsilon{\varepsilon}
\def\theta{\vartheta}
\def\Magma{\textsc{Magma}}
\def\SageMath{\textsc{SageMath}}
\def\eg{e.g.}
\def\ie{i.e.}
\def\loccit{loc.\ cit.}
\DeclareMathOperator{\disc}{disc}
\DeclareMathOperator{\DO}{\underline{I}}
\DeclareMathOperator{\SH}{\underline{S}}
\DeclareMathOperator{\End}{End}
\DeclareMathOperator{\Gal}{Gal}
\DeclareMathOperator{\GL}{GL}
\DeclareMathOperator{\im}{Im}
\DeclareMathOperator{\PGL}{PGL}
\DeclareMathOperator{\SL}{SL}
\DeclareMathOperator{\Sp}{Sp}
\def\C{\mathbb{C}}
\def\F{\mathbb{F}}
\def\Fpbar{\overline{\F}_p}
\def\FF{\mathcal{F}}
\def\H{\mathcal{H}}
\def\L{\mathcal{L}}
\def\O{\mathcal{O}}
\def\P{\mathbb{P}\,}
\def\Q{\mathbb{Q}}
\def\R{\mathbb{R}}
\def\Z{\mathbb{Z}}
\def\Qbar{\overline{\Q}}
\def\Xb{\bm{X}}
\renewcommand{\Im}{\mathrm{Im}}
\begin{document}

\title{Plane quartics over $\Q$ with complex multiplication}
\date{\today}

\begin{abstract}
  We give examples of smooth plane quartics over $\Q$ with complex
  multiplication over $\overline{\Q}$ by a maximal order with primitive CM
  type. We describe the required algorithms as we go; these involve the
  reduction of period matrices, the fast computation of Dixmier--Ohno
  invariants, and reconstruction from these invariants. Finally, we discuss
  some of the reduction properties of the curves that we obtain.
\end{abstract}

\author[K{\i}l{\i}\c{c}er]{P{\i}nar K{\i}l{\i}\c{c}er}
\address{%
	P{\i}nar K{\i}l{\i}\c{c}er,
  Carl von Ossietzky Universität Oldenburg,
  Institut für Mathematik,
  26111 Oldenburg, Germany
}
\email{pinarkilicer@gmail.com}

\author[Labrande]{Hugo Labrande}
\address{%
	Hugo Labrande,
  Loria/INRIA Lorraine,
  Équipe CARAMEL,
  615 Rue du jardin botanique,
  54602 Villers-lès-Nancy Cedex,
  France.
}
\address{%
  ISPIA,
  University of Calgary,
  2500 University Dr NW,
  Calgary, Alberta, T2N 1N4,
  Canada.
}
\email{hugo@hlabrande.fr}

\author[Lercier]{Reynald Lercier}
\address{%
  Reynald Lercier,
  DGA \& Univ Rennes, %
  CNRS, IRMAR - UMR 6625, %
  F-35000  Rennes, %
  France. %
}
\email{reynald.lercier@m4x.org}

\author[Ritzenthaler]{Christophe Ritzenthaler}
\address{%
	Christophe Ritzenthaler,
        Univ Rennes, %
        CNRS, IRMAR - UMR 6625, %
        F-35000 Rennes, %
        France. %
}
\email{christophe.ritzenthaler@univ-rennes1.fr}

\author[Sijsling]{Jeroen Sijsling}
\address{
	Jeroen Sijsling,
  Institut f\"ur Reine Mathematik,
  Universit\"at Ulm,
  Helm\-holtzstrasse 18,
  89081 Ulm,
  Germany
}
\email{jeroen.sijsling@uni-ulm.de}

\author[Streng]{Marco Streng}
\address{
	Marco Streng,
  Mathematisch Instituut,
  Universiteit Leiden,
  P.O. box 9512,
  2300 RA Leiden,
  The Netherlands
}
\email{streng@math.leidenuniv.nl}

\thanks{The third and fourth author acknowledge support from the CysMoLog
``d\'efi scientifique \'emergent'' of the Universit\'e de Rennes 1.
The sixth author was supported by the Netherlands Organisation for Scientific
Research (NWO) Vernieuwingsimpuls Veni.
The authors would like to thank Enea Milio and the anonymous referee
for various comments that were helpful for the improvement of the exposition.}

\subjclass[2010]{13A50, 14H25, 14H45, 14K22, 14Q05}
\keywords{complex multiplication, genus 3, plane quartics, explicit aspects}

\maketitle

\section*{Introduction}

Abelian varieties with complex multiplication (CM) are a fascinating common
ground between algebraic geometry and number theory, and accordingly have been
studied since a long time ago. One of the highlights of their theoretical study
was the proof of Kronecker's \emph{Jugendtraum}, which describes the ray class
groups of imaginary quadratic fields in terms of the division points of
elliptic curves. Hilbert's twelfth problem asked for the generalization of this
theorem to arbitrary number fields, and while the general version of this
question is still open, Shimura and Taniyama~\cite{shimura61:_compl_abelian}
gave an extensive partial answer for CM fields by using abelian varieties whose
endomorphism algebras are isomorphic to these fields. A current concrete
application of the theory of CM abelian varieties is in public key
cryptography, where one typically uses this theory to construct elliptic curves
with a given number of points~\cite{broeker-stevenhagen-cm}.

Beyond the theoretically well-understood case of elliptic curves, there are
constructions of curves with CM Jacobians in both genus $2$~\cite{spallek,
wamelen, BouyerStreng} and $3$~\cite{kowe, wenghypg3, LarioSomoza}. Note that
in genus~$2$ every curve is hyperelliptic, which leads to a relatively simple
moduli space; moreover, the examples in genus $3$ that we know up to now are
either hyperelliptic or Picard curves, which again simplifies considerations.
This paper gives the first $19$ conjectural examples of ``generic'' CM curves
of genus~$3$, in the sense that the curves obtained are smooth plane quartics
with trivial automorphism group. More precisely, it conjecturally completes the
list of curves of genus~$3$ over $\Q$ whose endomorphism rings over $\Qbar$ are
maximal orders of sextic fields (see Theorem \ref{thm:list_of_fields}). The
other curves of genus~$3$ with such endomorphism rings are either hyperelliptic
or Picard curves. The hyperelliptic ones were known to Weng~\cite{wenghypg3},
except for three curves that were computed by Balakrishnan, Ionica,
K{\i}l{\i}\c{c}er, Lauter, Vincent, Somoza and Streng by using the methods and
\SageMath{} implementation of~\cite{BILV,BILVcode}. The Picard curves had all
previously appeared in work by Koike-Weng~\cite{kowe} and
Lario-Somoza~\cite{LarioSomoza}.

To construct our curves, we essentially follow the classical path; first we
determine the period matrices, then the corresponding invariants, then we
reconstruct the curves from rational approximations of these invariants, and
finally we heuristically check that the curves obtained indeed have CM by the
correct order. In genus~$3$, however, all of these steps are somewhat more
complicated than was classically the case.

The proven verification that the curves obtained indeed have CM by the correct
order is left for another occasion; we restrict ourselves to a few remarks.
First of all, there are no known equivalents in genus~$3$ of the results that
bound the denominators of Igusa class polynomials~\cite{viray}. In fact very
little is known on the arithmetic nature of the Shioda and Dixmier--Ohno
invariants that are used in genus~$3$, and a theoretical motivation for finding
our list was to have concrete examples to aid with the generalization of the
results in \loccit{}

Using the methods in~\cite{cmsv-article} one could still verify the
endomorphism rings of our curves directly; this has already been done for the
simplest of our curves, namely
\begin{multline*} X_{15} : {{x}}^{4} -
  {{x}}^{3}{y} + 2\,{{x}}^{3}{z} + 2\,{{x}}^{2}{y}\,{z} + 2\,{{x}}^{2}{{z}}^{2}
  - 2\,{x}\,{{y}}^{2}{z} +\\ 4\,{x}\,{y}\,{{z}}^{2} - {{y}}^{3}{z} +
  3\,{{y}}^{2}{{z}}^{2} + 2\,{y}\,{{z}}^{3} + {{z}}^{4} = 0\,.
\end{multline*}
The main restriction for applying these methods to the other examples is the
time required for this verification. At any rate, the results in the final
section of this paper are coherent with the existence of a CM structure with
the given order.

The CM fields that give rise to our curves were determined by arithmetic
methods in~\cite{kilicerthesis, KS2016}. This also gives us Riemann matrices
that we can use to determine periods and hence the invariants of our quartic
curves. However, we do need to take care to reduce our matrices in order to get
good convergence properties for their theta values. The theory and techniques
involved are discussed in Section~\ref{sec:periods}.

With our reduced Riemann matrices in hand, we want to calculate the
corresponding theta values. We will need these values to high precision so as
to later recognize the corresponding invariants. The fast algorithms needed to
make this feasible were first developed in~\cite{Labrande} for genus~$2$;
further improvements are discussed in Section~\ref{sec:fastevaluation}. In the
subsequent Section~\ref{sec:do} we indicate how these values allow us to obtain
the Dixmier--Ohno invariants of our smooth plane quartic curves. This is based
on formulas obtained by Weber~\cite{weber, fiorentino}.

The theory of reconstructing smooth plane quartics from their invariants was
developed in~\cite{LRS16} and is a main theme of Section~\ref{sec:rec}. Equally
important is the performance of these algorithms, which was substantially
improved during the writing of this paper; starting from a reasonable tuple of
Dixmier--Ohno invariants over $\Q$, we now actually obtain corresponding plane
quartics over $\Q$ with acceptable coefficients, which was not always the case
before. In particular, we developed a ``conic trick'' which enables us to find
conics with small discriminant in the course of Mestre's reconstruction
algorithms for general hyperelliptic curves (by \loccit, the reconstruction
methods for non-hyperelliptic curves of genus $3$ reduce to Mestre's algorithms
for the hyperelliptic case). Section~\ref{sec:rec} discusses these and other
speed-ups and the mathematical background from which they sprang. Without them,
our final equations would have been too large to even write down.

We finally take a step back in Section~\ref{sec:remarks} to examine the
reduction properties of these curves, as well as directions for future work,
before giving our explicit list of curves in Section~\ref{sec:results}.

\section{Riemann matrices}\label{sec:periods}

Let $A$ be a principally polarized abelian variety of dimension $g$ over~$\C$,
such as the Jacobian $A=J(C)$ of one of the curves that we are looking for.
Then by integrating over a symplectic basis of the homology and normalizing,
the manifold $A$ gives rise to a point $\tau$ in the Siegel upper half space
$\H_g$, well-defined up to the action of the symplectic group $\Sp_{2 g} (\Z)$.
The elements of $\H_g$ are also known as \defi{Riemann matrices}. In
Section~\ref{sec:listoffields}, we give the list, due to K\i{}l\i{}\c{c}er and
Streng, of all fields $K$ that can occur as endomorphism algebra of a simple
abelian threefold over $\Q$ with complex multiplication over~$\Qbar$. In
Section~\ref{sec:fieldtomatrix}, we recall Van Wamelen's methods for listing
all Riemann matrices with complex multiplication by the maximal order of a
given field. In Section~\ref{sec:reducematrix}, we show how to reduce Riemann
matrices to get Riemann matrices with better convergence properties.

\subsection{The CM fields}
\label{sec:listoffields}

Let $A$ be an abelian variety of dimension $g$ over a field $k$ of
characteristic~$0$, let $K$ be a number field of degree $2g$ and let
$\mathcal{O}$ be an order in~$K$. We say that $A$ \defi{has CM} by
$\mathcal{O}$ (over $\overline{k}$) if there exists an embedding $\mathcal{O}
\rightarrow \End(A_{\overline{k}})$.

If $A$ is simple over $\overline{k}$ and has CM by the full ring of integers
$\mathcal{O}_K$ of $K$, then we have in fact $\mathcal{O}_K\cong
\End(A_{\overline{k}})$ and $K$ is a \defi{CM field}, \ie{}, a totally
imaginary quadratic extension $K$ of a totally real number field
$F$~\cite{LangCM}.

The \defi{field of moduli} of a principally polarized abelian variety $A/k$ is
the residue field of the corresponding point in the moduli space of principally
polarized abelian varieties. It is also the intersection of the fields of
definition of $A$ in $\overline{k}$ \cite[p.37]{koizumi}. In particular, if $A$
is defined over $\Q$, then its field of moduli is~$\Q$. The field of moduli  of
a curve or an abelian variety is not always a field of
definition~\cite{shimura-moduli}. However, we have the following theorem.

\begin{theorem}  \label{thm:list_of_fields}
  There are exactly $37$ isomorphism classes of CM fields $K$ for which there
  exist principally polarized abelian threefolds $A/\Qbar$ with field of
  moduli~$\Q$ and $\End(A)\cong \mathcal{O}_K$. The set of such fields is
  exactly the list of fields given in Table~\ref{table: CMfields}.

  For each such field $K$, there is exactly one such principally polarized
  abelian variety $A$ up to $\Qbar$-isomorphism, and this variety is the
  Jacobian of a curve $X$ of genus $3$ defined over~$\Q$. In particular, the
  abelian variety $A$ itself is defined over $\Q$.
\end{theorem}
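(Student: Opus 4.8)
The plan is to split the assertion into an arithmetic statement --- correctness of the list in Table~\ref{table: CMfields} --- and a geometric one --- that the associated abelian varieties are Jacobians of curves over $\Q$. For the arithmetic half I would invoke the work of K{\i}l{\i}\c{c}er and Streng~\cite{kilicerthesis,KS2016}. The mechanism there is the main theorem of complex multiplication: the $\Qbar$-isomorphism classes of principally polarized abelian threefolds $A$ with $\End(A)\cong\mathcal{O}_{K}$ and primitive CM type are parametrised by a CM type $\Phi$ on $K$ together with ideal and unit data, and $\Gal(\Qbar/\Q)$ acts on this parameter set through its permutation action on CM types and, via the same theorem, on the accompanying ideal classes. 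Requiring the field of moduli to be $\Q$ forces this Galois orbit to collapse to a single class, which amounts to $\Phi$ being stable up to equivalence under $\Gal(\Qbar/\Q)$ together with a vanishing condition of ``CM class number one'' type; an effective upper bound on $\disc K$ then turns this into a finite search, yielding the $37$ fields and, for each, the uniqueness of $A$ up to $\Qbar$-isomorphism.

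It then remains to realise $A$ as a Jacobian over $\Q$. Since $\End^{0}(A)\cong K$ is a field of degree $6=2\dim A$, the variety $A$ is not isogenous to a proper power of a lower-dimensional abelian variety, so it is simple, in particular indecomposable as a principally polarized abelian variety. In dimension $3$ the image of the Torelli map is exactly the locus of indecomposable principally polarized abelian varieties, and each of these, over an algebraically closed field, is the Jacobian of a unique smooth projective curve of genus $3$ --- hyperelliptic precisely when its theta divisor is singular. Over $\Qbar$ this gives $A_{\Qbar}\cong\Jac(X_{0})$ for a genus-$3$ curve $X_{0}/\Qbar$, unique up to isomorphism; since the Jacobian construction is $\Gal(\Qbar/\Q)$-equivariant and $A$ has field of moduli $\Q$, so does $X_{0}$.

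The final point is to descend $X_{0}$ to $\Q$. When $\Aut(X_{0})$ is trivial --- the ``generic'' case, which is the one of primary interest here --- field of moduli equals field of definition for $X_{0}$ and we are done. In general, I would argue from the fact that $A$ itself is defined over $\Q$ (this is part of~\cite{kilicerthesis,KS2016}: the relevant reflex-norm class group is trivial, so the main theorem of complex multiplication directly furnishes a model of $(A,\lambda)$ over $\Q$): fixing a $\Q$-descent datum $\{\varphi_{\sigma}\colon{}^{\sigma}A\xrightarrow{\sim}A\}$, the Torelli theorem lets one write $\varphi_{\sigma}=\epsilon_{\sigma}\Jac(\psi_{\sigma})$ with $\epsilon_{\sigma}\in\{\pm1\}$ and $\psi_{\sigma}\colon{}^{\sigma}X_{0}\xrightarrow{\sim}X_{0}$, and substituting this into the cocycle relation $\varphi_{\sigma}\circ{}^{\sigma}\varphi_{\tau}=\varphi_{\sigma\tau}$ --- using that $-1$ is not induced by a curve automorphism when $X_{0}$ is non-hyperelliptic, and is absorbed when it is hyperelliptic --- shows that $\{\psi_{\sigma}\}$ is itself a descent datum. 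Thus $X_{0}$ descends to a curve $X/\Q$ whose Jacobian lies in the $\Qbar$-isomorphism class of $A$; this $X$, together with $\Jac(X)$ as the model of $A$ over $\Q$, is exactly what the theorem asserts.

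The real difficulty sits in the first paragraph: producing an \emph{effective} bound on $\disc K$, so that the ``class number one''-type search is provably exhaustive --- the substantial finiteness theorem behind~\cite{kilicerthesis,KS2016}. By comparison, simplicity of $A$, the dimension-$3$ Torelli theorem, and the descent of $X_{0}$ along the near-monomorphism $\mathcal{M}_{3}\to\mathcal{A}_{3}$ are formal once that input is granted, the only nuisance being the automorphism bookkeeping for the few fields with $\mu(K)\neq\{\pm1\}$, where $\mu(K)$ denotes the group of roots of unity in $\mathcal{O}_{K}$.
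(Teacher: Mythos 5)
Your treatment of the arithmetic half agrees with the paper: both defer the enumeration of the $37$ fields and the uniqueness of $A$ up to $\Qbar$-isomorphism to K{\i}l{\i}\c{c}er's thesis (Theorem 4.1.1 there). The divergence, and the gap, is in the descent argument.

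You argue: first descend $A$ to $\Q$ by asserting that the cited work already furnishes a $\Q$-model of $(A,\lambda)$ (``the relevant reflex-norm class group is trivial''), then deduce descent of the curve $X_0$ via a Torelli-type cocycle computation. The first step is precisely the part that the paper says is \emph{not} contained in the cited references -- the paper states explicitly that \cite{kilicerthesis} gives only the $37$ fields and uniqueness of $A$, and that ``we need only prove the statement on the field of definition.'' Having field of moduli $\Q$ does not by itself give a model over $\Q$: for a generic principally polarized abelian threefold one has $\Aut(A,\lambda)=\{\pm 1\}$, so the descent obstruction lives in $H^2(\Gal(\Qbar/\Q),\{\pm 1\})\cong\mathrm{Br}(\Q)[2]$, which is nontrivial. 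Moreover the entry $h_K^*$ in Table~\ref{table: CMfields} equals $4$ in several cases, so blanket triviality of the relevant class groups is not available. Thus your cocycle computation in the final paragraph is circular: it fixes a $\Q$-descent datum $\{\varphi_\sigma\}$ for $A$, which presupposes exactly what the theorem asserts. Your earlier observation -- trivial $\Aut(X_0)$ forces field of moduli $=$ field of definition -- does handle the $19$ plane-quartic cases with $\#=2$ in the table, but the theorem covers all $37$, including hyperelliptic and Picard curves with larger $\mu_K$.

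The paper takes the opposite route through the Torelli biconditional ($k$ is a field of definition for $A$ if and only if it is one for $X$, \cite[Appendix]{lauter}): it shows that $X$ descends, and then $A$ does too. The essential input you are missing is the explicit genus-$3$ classification of when field of moduli fails to be a field of definition, from \cite{LRRS14, LRS-Desc}: the only possible failures are plane quartics with automorphism group $\Z/2\Z$ and hyperelliptic curves with automorphism group $\Z/2\Z\times\Z/2\Z$. One then rules these out purely from the structure of $\mu_K$: if $\Q(i)\subset K$ the curve is hyperelliptic with $\Z/4\Z\subset\Aut(X)$ (by Weng) and descends; otherwise, if $X$ is hyperelliptic its automorphism group is the cyclic group $\mu_K$, never $\Z/2\Z\times\Z/2\Z$; and if $X$ is a plane quartic its automorphism group is $\mu_K/\{\pm 1\}$, which cannot be $\Z/2\Z$ since that would force $\Q(i)\subset K$. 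This bookkeeping is the content you would need to supply in place of the claim that $A$ comes over $\Q$ for free.
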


\begin{proof}
  The first part, up to and including uniqueness of $A$, is exactly
  Theorem~4.1.1 of K\i{}l\i{}\c{c}er's thesis~\cite{kilicerthesis}. These $37$
  cases are listed in Table~\ref{table: CMfields}. Therefore we need only prove
  the statement on the field of definition, which can be done here directly
  from the knowledge of the CM field. By the theorem of
  Torelli~\cite[Appendix]{lauter}, $k$ is a field of definition for the
  principally polarized abelian threefold $A$ if and only if it is a field of
  definition for $X$. This implies that the field of moduli of $X$ equals $\Q$,
  and we have to show that this field of moduli is also a field of definition.

  In genus $3$ all curves descend to their field of moduli, except for plane
  quartics with automorphism group $\Z / 2 \Z$ and hyperelliptic curves with
  automorphism group $\Z / 2 \Z \times \Z / 2 \Z$ (see \cite{LRRS14,
  LRS-Desc}). We finish by showing that neither of these occurs in
  Table~\ref{table: CMfields}. If $\Q(i)$ is a subfield of $K$, then by Weng
  \cite[\S 4.4--4.5]{wenghypg3}, the curve $X$ is hyperelliptic with
  automorphism group containing $\Z / 4\Z$, in which case it descends to its
  field of moduli. We therefore assume the contrary. If the curve $X$ over
  $\Qbar$ is hyperelliptic, then its automorphism group is the group $\mu_K$
  itself. Since this group is cyclic, it cannot be isomorphic to $\neq \Z / 2\Z
  \times \Z / 2 \Z$ and the curve $X$ descends to its field of moduli. If $X$
  is non-hyperelliptic, then its automorphism group is $\mu_K/\{\pm 1\}$.
  Because of our assumption on $K$, this group is not isomorphic to $\Z/2\Z$,
  and again $X$ descends to $\Q$.
\end{proof}

Table~\ref{table: CMfields} gives a list of cyclic sextic CM fields~$K$,
arranged as follows. Let $K$ be such a field. Then it has an imaginary
quadratic subfield $k$ and a totally real cubic subfield~$F$. In
Table~\ref{table: CMfields}, the number $d_k$ is the discriminant of~$k$; the
polynomial $p_F$ is a defining polynomial for $F$. These two entries of the
table define the field~$K$. The number $f_F$ is the conductor of $F$, and $d_K$
is the discriminant of $K$. The entry $\#$ is the order of the automorphism
group of the Jacobian of the corresponding curve, which is nothing but the
number of roots of unity in~$K$. The ``Type'' column indicates whether the
conjectured model of the curve is hyperelliptic (H), Picard (P), or a plane
quartic with trivial automorphism group~(G). The ``Curve'' column gives a
reference to the conjectured model over $\Q$ of the curve. The cases $1$, $2$,
$3$, $5$, \ldots, $20$ correspond to the smooth plane quartics $X_i$ in
Section~\ref{sec:results}.

\newcommand{\refbiklssvplus}{\cite{BILVcode, LR11}+see \S\ref{sec:listoffields}}

\begin{table}[!htbp]
\centering
\resizebox{12.5cm}{!}{
\begin{tabular}{| c | c  | l | c | c | c | c | c | c |} \hline
Case    &$-d_k$& \hspace{12mm}  $p_F$    & $f_F$        & $-d_K$          &$h_K^*$&$\#$ & Type & Curve\\ \hline
  1     & $7$ & $X^3 + X^2 - 4X + 1$    & $13$         & $7^3\cdot13^4$   &$1$  & $2$ & G & $X_{1}$ (§\,\ref{sec:results})\\ \hline
  2     & $7$ & $X^3 - 3X - 1$          & $3^2$        & $3^8\cdot7^3$    &$1$  & $2$ & G & $X_{2}$ (§\,\ref{sec:results})\\ \hline
  3     & $7$ & $X^3 + 8X^2 - 51X + 27$ & $7\cdot 31$  & $7^5\cdot31^4$   &$1$  & $2$ & G & $X_{3}$ (§\,\ref{sec:results})\\ \hline
  4     & $7$ & $X^3 + 6X^2 - 9X + 1$   & $3^2\cdot 7$ & $3^8\cdot7^5$    &$1$  & $2$ & H &
\refbiklssvplus
\\ \hline
  5     & $7$ & $X^3 + X^2 - 30X + 27$  & $7\cdot 13$  & $7^5\cdot13^4$   &$1$  & $2$ & G & $X_{5}$ (§\,\ref{sec:results}) \\ \hline
  6     & $7$ & $X^3 + 4X^2 - 39X + 27$ & $7\cdot 19$  & $7^5\cdot19^4$   &$1$  & $2$ & G & $X_{6}$ (§\,\ref{sec:results}) \\ \hline
  7     & $7$ & $X^3 + X^2 - 24X - 27$  & $73$         & $7^3\cdot73^4$   &$4$  & $2$ & G & $X_{7}$ (§\,\ref{sec:results}) \\ \hline
  8     & $7$ & $X^3 + 2X^2 - 5X + 1$   & $19$         & $7^3\cdot19^4$   &$4$  & $2$ & G & $X_{8}$ (§\,\ref{sec:results}) \\ \hline
  9     & $8$ & $X^3 + X^2 - 4X + 1$    & $13$         & $2^9\cdot13^4$   &$1$  & $2$ & G & $X_{9}$ (§\,\ref{sec:results}) \\ \hline
  10    & $8$ & $X^3 + X^2 - 2X - 1$    & $7$          & $2^9\cdot7^4$    &$1$  & $2$ & G & $X_{10}$ (§\,\ref{sec:results}) \\ \hline
  11    & $8$ & $X^3 + X^2 - 10X - 8$   & $31$         & $2^9\cdot31^4$   &$4$  & $2$ & G & $X_{11}$ (§\,\ref{sec:results}) \\ \hline
  12    & $11$& $X^3 + X^2 - 2X - 1$    & $7$          & $7^4\cdot11^3$   &$1$  & $2$ & G & $X_{12}$ (§\,\ref{sec:results}) \\ \hline
  13    & $11$& $X^3 + X^2 - 14X + 8$   & $43$         & $11^3\cdot43^4$  &$4$  & $2$ & G & $X_{13}$ (§\,\ref{sec:results}) \\ \hline
  14    & $11$& $X^3 + 2X^2 - 5X + 1$   & $19$         & $11^3\cdot19^4$  &$4$  & $2$ & G & $X_{14}$ (§\,\ref{sec:results}) \\ \hline
  15    & $19$& $X^3 + 2X^2 - 5X + 1$   & $19$         & $19^5$           &$1$  & $2$ & G & $X_{15}$ (§\,\ref{sec:results}) \\ \hline
  16    & $19$& $X^3 - 3X - 1$          & $3^2$        & $3^8\cdot19^3$   &$4$  & $2$ & G & $X_{16}$ (§\,\ref{sec:results}) \\ \hline
  17    & $19$& $X^3 + 9X^2 - 30X + 8$  & $3^2\cdot 19$& $3^8\cdot19^5$   &$1$  & $2$ & G & $X_{17}$ (§\,\ref{sec:results}) \\ \hline
  18    & $19$& $X^3 + 7X^2 - 66X - 216$& $13\cdot 19$ & $13^4\cdot19^5$  &$1$  & $2$ & G & $X_{18}$ (§\,\ref{sec:results}) \\ \hline
  19    & $43$& $X^3 + X^2 - 14X+ 8$    & $43$         & $43^5$           &$1$  & $2$ & G & $X_{19}$ (§\,\ref{sec:results}) \\ \hline
  20    & $67$& $X^3 + 2X^2 - 21X - 27$ & $67$         & $67^5$           &$1$  & $2$ & G & $X_{20}$ (§\,\ref{sec:results}) \\ \hline
  21    & $4$ & $X^3 + 2X^2 - 5X + 1$   & $19$         & $2^6\cdot19^4$   &$1$  & $4$ & H &~\cite[\S6 3rd ex.]{wenghypg3} \\ \hline
  22    & $4$ & $X^3 - 3X - 1$          & $3^2$        & $2^6\cdot3^8$    &$1$  & $4$ & H &~\cite[\S6 2nd ex.]{wenghypg3} \\ \hline
  23    & $4$ & $X^3 + X^2 - 2X - 1$    & $7$          & $2^6\cdot7^4$    &$1$  & $4$ & H &\cite{TT91} (also~\cite[\S6 1st ex.]{wenghypg3}) \\ \hline
  24    & $4$ & $X^3 + X^2 - 10X - 8$   & $31$         & $2^6\cdot31^4$   &$4$  & $4$ & H &~\cite[\S6 4th ex.]{wenghypg3} \\ \hline
  25    & $4$ & $X^3 + X^2 - 14X + 8$   & $43$         & $2^6\cdot43^4$   &$4$  & $4$ & H &\refbiklssvplus \\ \hline
  26    & $4$ & $X^3 + 3X^2 - 18X + 8$  & $3^2\cdot 7$ & $2^6\cdot3^8 7^4$&$4$  & $4$ & H &\refbiklssvplus \\ \hline
  27    & $3$ & $X^3 + X^2 - 4X + 1$    & $13$         & $3^3\cdot13^4$   &$1$  & $6$ & P &~\cite[6.1(3)]{kowe} (also~\cite[4.1.3]{LarioSomoza})\\ \hline
  28    & $3$ & $X^3 + X^2 - 2X- 1$     & $7$          & $3^3\cdot7^4$    &$1$  & $6$ & P &~\cite[6.1(2)]{kowe} (also~\cite[4.1.2]{LarioSomoza}) \\ \hline
  29    & $3$ & $X^3 + X^2 - 10X - 8$   & $31$         & $3^3\cdot31^4$   &$1$  & $6$ & P & ~\cite[6.1(4)]{kowe} (also~\cite[4.1.4]{LarioSomoza})  \\ \hline
  30    & $3$ & $X^3 + X^2 - 14X+ 8$    & $43$         & $3^3\cdot43^4$   &$1$  & $6$ & P &~\cite[6.1(5)]{kowe} (also~\cite[4.1.5]{LarioSomoza})  \\ \hline
  31    & $3$ & $X^3 + 3X^2 - 18X + 8$  & $3^2\cdot 7$ & $3^9\cdot7^4$    &$1$  & $6$ & P &~\cite[4.2.1.1]{LarioSomoza} \\ \hline
  32    & $3$ & $X^3 + 6X^2 - 9X + 1$   & $3^2\cdot 7$ & $3^9\cdot7^4$    &$1$  & $6$ & P &~\cite[4.2.1.2]{LarioSomoza} \\ \hline
  33    & $3$ & $X^3 + 3X^2 - 36X - 64$ & $3^2\cdot 13$& $3^9\cdot13^4$   &$1$  & $6$ & P &~\cite[4.2.1.3]{LarioSomoza} \\ \hline
  34    & $3$ & $X^3 + 4X^2 - 15X - 27$ & $61$         & $3^3\cdot61^4$   &$4$  & $6$ & P &~\cite[4.3.1]{LarioSomoza} \\ \hline
  35    & $3$ & $X^3 + 2X^2 - 21X - 27$ & $67$         & $3^3\cdot67^4$   &$4$  & $6$ & P &~\cite[4.3.2]{LarioSomoza} \\ \hline
  36    & $7$ & $X^3 + X^2 - 2X - 1$    & $7$          & $7^5$            &$1$  &$14$ & H & $y^2 = x^7-1$
  \\ \hline
  37    & $3$ & $X^3 - 3X - 1$          & $3^2$        & $3^9$            &$1$  & $18$& P &  $y^3 = x^4-x$
\\ \hline
 \end{tabular}
}
\vspace{0,5cm}
\caption{\label{table: CMfields}
CM fields in genus $3$ whose maximal orders give rise to CM curves
with field of moduli~$\Q$, sorted by the order $\#$ of the group of roots of
unity.
}
\end{table}

In the hyperelliptic cases, curves can be reconstructed by applying the
\SageMath{}~\cite{sage} code of Balakrishnan, Ionica, Lauter and
Vincent~\cite{BILVcode} (based on~\cite{wenghypg3, BILV}) and
\Magma{}~\cite{Magma} functionality due to Lercier and Ritzenthaler for
hyperelliptic reconstruction in genus $3$~\cite{LR11}.

Some of these curves were already computed by Weng~\cite{wenghypg3}. The final
cases $4$, $25$, $26$ were found by Balakrishnan, Ionica, K{\i}l{\i}\c{c}er,
Lauter, Somoza, Streng and Vincent and will appear online soon. The Picard
curves can be obtained as a special case of our construction, but are more
efficiently obtained using the methods of Koike--Weng~\cite{kowe} and
Lario--Somoza~\cite{LarioSomoza}. The rational models in~\cite{wenghypg3, kowe,
LarioSomoza} as well as those that can be obtained with~\cite{BILVcode,
LR11} are correct up to some precision over~$\C$. In case~23, the hyperelliptic
model was proved to be correct in Tautz--Top--Verberkmoes~\cite[Proposition
4]{TT91}. The hyperelliptic model $y^2 = x^7-1$ for case 36 is a classical
result (see Example (II) on page 76 in Shimura~\cite{Shimura77}) and the Picard
model $y^3 = x^4-x$ for case 37 is similar (\eg{}\
Bouw--Cooley--Lauter--Lorenzo--Manes--Newton--Ozman~\cite[Lemma 5.1]{BCLLMNO});
both can be proven by exploiting the large automorphism group of the curve.

\begin{remark}
  In fact the curve in Case $4$ also admits a hyperelliptic defining equation
  over~$\Q$, which is not automatic; \textit{a priori} it is a degree $2$ cover
  of conic that we do not know to be isomorphic to $\P^1$. However, in this
  case the algorithms in~\cite{cmsv-article} show that the conjectural model
  obtained is correct, so that also in this case a hyperelliptic model exists
  over the field of moduli~$\Q$.
\end{remark}

In this paper, we construct models for the generic plane quartic cases.

\subsection{Obtaining Riemann matrices from CM fields}
\label{sec:fieldtomatrix}

Let $\L$ be a lattice of full rank $2g$ in a complex $g$-dimensional vector
space~$V$. The quotient $V/\L$ is a complex Lie group, called a \defi{complex
torus}. This complex manifold is an abelian variety if and only if it is
projective, which is true if and only if there exists a \defi{Riemann form}
for~$\L$, that is, an $\R$-bilinear form $E : V\times V \longrightarrow \R$
such that $E(\L, \L)\subset \Z$ and such that the form
\begin{align}
  V\times V &\longrightarrow \R\\
  (u,v) &\longmapsto E(u, iv) \nonumber
\end{align}
is symmetric and positive definite. The Riemann form is called a
\defi{principal polarization} if and only if the form $E$ on $\L$ has
determinant equal to~$1$. We call a basis $(\lambda_1,\ldots, \lambda_{2g})$ of
$\L$ \defi{symplectic} if the matrix of $E$ with respect to the basis is given
in terms of $g\times g$ blocks as
\begin{equation}
  \begin{split}
    \Omega_g =
    \left( \begin{matrix}
    0 & I_g \\
    -I_g & 0
    \end{matrix} \right).
  \end{split}
\end{equation}
For every principal polarization, there exists a symplectic basis. If we write
out the elements of a symplectic basis as column vectors in terms of a
$\C$-basis of $V$, then we get a $g\times 2g$ \defi{period matrix}.

The final $g$ elements of a symplectic basis of $\L$ for $E$ form a $\C$-basis
of $V$, so we use this as our basis of~$V$. Then the period matrix takes the
form $(\tau \ |\ I_{g})$, where the $g\times g$ complex matrix~$\tau$ has the
following properties:
\begin{itemize}
  \item[(1)] $\tau$ is symmetric,
  \item[(2)] $\im(\tau)$ is positive definite.
\end{itemize}
The set of such matrices forms the Siegel upper half space $\H_g$. Conversely,
from every Riemann matrix~$\tau$, we get the complex abelian variety
\begin{equation}
  \C^g/(\tau \Z^g + \Z^g)
\end{equation}
which we can equip with a Riemann form given by $\Omega_g$ with respect to the
basis given by the columns of $(\tau \ |\ I_{g})$.

Given a CM field~$K$, Algorithm~1 of Van Wamelen~\cite{wamelen} (based on the
theory of Shimura--Taniyama~\cite{shimura61:_compl_abelian}) computes at least
one Riemann matrix for each isomorphism class of principally polarized abelian
variety with CM by the maximal order of~$K$. For details, and an improvement
which computes \emph{exactly} one Riemann matrix for each isomorphism class,
see also Streng~\cite{streng, strengrecip}. In our implementation, we could simplify the
algorithm slightly, because the group appearing in Step~2
of~\cite[Algorithm~1]{wamelen} is computed by
K\i{}l\i{}\c{c}er~\cite[Lemma~4.3.4]{kilicerthesis} for the fields in
Table~\ref{table: CMfields}.

\subsection{Reduction of Riemann matrices}
\label{sec:reducematrix}

There is an action on the Siegel upper half space $\H_g$ by the symplectic
group
\begin{equation}
  \Sp_{2g}(\Z)
  =
  \{ M\in\GL_{2g}(\Z): M^t\Omega_g M
  =
  \Omega_g\},
\end{equation}
given by
\begin{equation}
  \left( \begin{matrix}
    A & B \\
    C & D
  \end{matrix} \right) (\tau)
  =
  (A\tau+B)(C\tau+D)^{-1}.
\end{equation}

The isomorphism class of principally polarized abelian variety
$(\C^g/(\tau\Z^g+\Z^g), \Omega_g)$ of Section~\ref{sec:fieldtomatrix} depends
only on the orbit of $\tau$ under the action of $\Sp_{2g}(\Z)$, so we change
$\tau$ into an $\Sp_{2g}(\Z)$-equivalent matrix on which the theta constants
have faster convergence. For this, we use \cite[Algorithm 2 in
\S4.1]{labrande-thome}. To avoid numerical instability, we replace the
condition $|\tau_{11}^{\prime}|\leq 1$ in Step~3 of loc.\ cit.\ by
$|\tau_{11}^{\prime}|< 0.99$. The result of this reduction then is a matrix
$\tau\in\H_g$ such that the real parts of all entries have absolute value $\leq
\frac{1}{2}$, such that the upper left entry has absolute value
$|\tau_{11}|\geq 0.99$ and such that the imaginary part $Y = \Im(\tau)$ is
\defi{Minkowski-reduced}, \ie,
\begin{itemize}
\item[(a)] for all $j =1, \ldots, g$ and all $v=(v_1,\ldots,v_g)\in\Z^g$ with
  $\gcd(v_j,\ldots, v_g)=1$, we have ${}^tv Y v \geq Y_{j\,j}$, and
\item[(b)] for all $j = 1,\ldots,g-1$, we have $Y_{j\,j+1}\geq 0$.
\end{itemize}
For example, taking $i\not=j$ and $v=e_i\pm e_j$ in (a) gives $Y_{ii}\pm
2Y_{ij}\geq 0$, while taking $j=1$ and $v=e_i$ in (a) gives $Y_{ii}\geq
Y_{11}$, so
\begin{align}\label{eq:minkowskiexample}
  |Y_{ij}|\leq \frac{1}{2}Y_{ii},\quad Y_{11}\leq Y_{ii}.
\end{align}
We also have
\begin{align}
  \Im(\tau_{11}) \geq \sqrt{0.99^2-0.5^2} > 0.85.\label{eq:85}
\end{align}

We have implemented this algorithm, as well as a version with Minkowski reduction
replaced by LLL reduction, which scales better with $g$ than Min\-kowski
reduction. For a self-contained exposition including a proof that the
LLL-version of the algorithm terminates, see the first arXiv version\footnote{
  \href{https://arxiv.org/abs/1701.06489v1}{https://arxiv.org/abs/1701.06489v1}
} of our paper, and for an alternative approach with LLL reduction, see
Deconinck, Heil, Bobenko, van Hoeij, and Schmies~\cite{DHBHS}.

\subsection{Conclusion and efficiency}

It takes only a minute to compute the reduction with either the Minkowski or
the LLL version of the reduction algorithm for all our Riemann matrices. We did
not notice any difference in efficiency of numerical evaluations of
Dixmier-Ohno invariants (as in Section~\ref{sec:computingDO}) between the
Minkowski-version of the reduction and the LLL-version of the reduction.
Without reduction, we were unable to do the computations to sufficient
precision for reconstructing all the curves. We conclude that for $g=3$, there
is no reason to prefer one of these algorithms over the other, but it is very
important to use at least one of them. We do advise caution with the
LLL-version, as the analysis in Section~\ref{sec:computingDO} below is valid
only for Minkowski-reduced matrices.

\section{Computing the Dixmier--Ohno invariants}
\label{sec:computingDO}

In this section, we show how given a Riemann matrix $\tau$ we can obtain an
approximation of the Dixmier--Ohno invariants of a corresponding plane quartic
curve. One procedure has been described in~\cite{guardia} and relies on the
computation of derivatives of odd theta functions. Here we take advantage of
the existence of fast strategies to compute the Thetanullwerte to emulate the
usual strategy for such computations in the hyperelliptic case~\cite{wenghypg3,
BILV}: we use an analogue of the Rosenhain formula to compute a special
\defi{Riemann model} for the curve from the Thetanullwerte, from which we then
calculate an approximation of the Dixmier--Ohno invariants. By normalizing
these, we find an explicit conjectural representative of the Dixmier--Ohno
invariants as an element of a weighted projective space over $\Q$.

\subsection{Fast computation of the Thetanullwerte from a Riemann matrix}
\label{sec:fastevaluation}

\begin{definition}\label{def:theta}
  The \defi{Thetanullwerte} or \defi{theta-constants} of a Riemann matrix
  $\tau\in \H_3$ are defined as
  \begin{equation}
    \theta_{[a;b]}(0, \tau)
    =
    \sum_{n \in \mathbb{Z}^3}
    e^{i \pi \left({}^t\left(n+a\right) \tau (n+a) + 2{}^t (n+a)b\right)},
  \end{equation}
  where $a,b \in \{0,{1}/{2}\}^3$. We define the \defi{fundamental
  Thetanullwerte} to be those $\theta_{[a;b]}$ with $a=0$; there are $8$ of
  them.
\end{definition}

In many applications, only the $36$ so-called \defi{even} Thetanullwerte are
considered, which are those for which the dot product $4a\cdot b$ is even. The
other Thetanullwerte turn out to always be equal to~$0$.

We further simplify notation by writing
\begin{equation}
  \theta_{[a;b]} = \theta_i, \qquad i
  = 2(b_0 + 2b_1 + 4b_{2})
  + 2^{4}(a_0 + 2 a_1 + 4a_2).
\end{equation}
In other words, we number the Thetanullwerte by interpreting the reverse of the
sequence $(2b||2a)$ as a binary expansion. This is the numbering used in,
\eg{},~\cite{Dupont,Labrande}. For notational convenience, we write
$\theta_{n_1, \ldots, n_k}$ for the $k$-tuple $\theta_{n_1}, \ldots,
\theta_{n_k}$. In this section, we describe a fast algorithm to
compute the Thetanullwerte with high precision. Note that it is sufficient to
describe an algorithm that computes the fundamental Thetanullwerte; we can then
compute the squares of all $64$ Thetanullwerte by computing the fundamental
ones at ${\tau}/{2}$, then use the following $\tau$-duplication
formula~\cite[Chap.~IV]{igusa1}:
\begin{equation}
  \theta_{[a;b]}(0,\tau)^2 =
  \frac{1}{2^3} \sum_{\beta \in \frac{1}{2} \mathbb{Z}^3 / \mathbb{Z}^3}
  e^{-4i\pi {}^ta\beta} \theta_{[0;b+\beta]} \left(0, \frac{\tau}{2}\right)
  \theta_{[0;\beta]} \left(0,\frac{\tau}{2}\right).
\end{equation}
We can then recover the $64$ Thetanullwerte from their squares, by using a
low-precision approximation of their value to decide on the appropriate square
root. Both algorithms described in this subsection have been implemented in
\Magma~\cite{ourimplementation}.

\subsubsection{Naive algorithm for the Thetanullwerte}\label{sec:naive}

A (somewhat) naive algorithm to compute the Thetanullwerte consists in
computing the sum in Definition~\ref{def:theta} until the remainder is too
small to make a difference at the required precision. We show in this section
that it is possible to compute the genus 3 Thetanullwerte up to precision $P$
(that is, up to absolute difference of absolute value at most $10^{-P}$) by
using $O(\mathcal{M}(P) P^{1.5})$ bit operations. Here $\mathcal{M}(P)$ is the
number of bit operations needed for one multiplication of $P$-bit integers.
This running time is the same as for the general strategy given
in~\cite{Deconinck}, as analyzed in~\cite[Section~5.3]{Labrande}.

Let $t_{m,n,p} = e^{i \pi (m,n,p) \tau {}^t(m,n,p)}$, so $\theta_0(\tau) =
\sum_{m, n, p \in\Z} t_{m,n,p}$. Our algorithm computes the approximation
\begin{equation}
  S_B = \sum_{m, n, p \in [-B, B]} t_{m,n,p}
\end{equation}
of $\theta_0(\tau)$. The main idea is to use the following recurrence relation.
Let $q_{jk} = e^{i \pi \tau_{jk}}$. Then we have
\begin{align}
  t_{m+1,n,p}^{\vphantom{p}} & = t_{m,n,p}^{\vphantom{p}} q_{11}^{2m\vphantom{p}} q_{11}^{\vphantom{p}} q_{12}^{2n\vphantom{p}} q_{13}^{2p}, \nonumber\\
  t_{m,n+1,p}^{\vphantom{p}} & = t_{m,n,p}^{\vphantom{p}} q_{22}^{2n\vphantom{p}} q_{22}^{\vphantom{p}} q_{12}^{2m\vphantom{p}} q_{23}^{2p}, \\
  t_{m,n,p+1}^{\vphantom{p}} & = t_{m,n,p}^{\vphantom{p}} q_{33}^{2p} q_{33}^{\vphantom{p}} q_{23}^{2n\vphantom{p}} q_{13}^{2m\vphantom{p}}.\nonumber
\end{align}

\begin{thealgo}[Given a period matrix $\tau \in \FF_3(\{N\})$ and a bound $B$,
  compute $S_B$.]\label{alg:thetanullwerteNaive}{}\
  \begin{enumerate}
    \item $S_B \leftarrow t_{0,0,0} = 1$.
    \item For $m = 1, 2, \ldots, B$ and $m = -1, -2, \ldots, -B$:
    \item \qquad Compute $t_{m,0,0}$ using the recursion and add it to $S_B$.
    \item \qquad For $n = 1, 2, \ldots, B$ and $n = -1, -2, \ldots, -B$:
    \item \qquad \qquad Compute $t_{m,n,0}$ using the recursion and add it to
      $S_B$.
    \item \qquad \qquad For $p = 1, 2, \ldots, B$ and $p = -1, -2, \ldots, -B$:
    \item \qquad \qquad \qquad Compute $t_{m,n,p}$ using recursion and add it
      to $S_B$.
    \item Return $S_B$.
  \end{enumerate}
\end{thealgo}

This algorithm can be modified to compute approximations of any fundamental
Thetanullwerte $\theta_{[0,b]}$ by adjusting the sign of each term (with a
factor $(-1)^{(m,n,p).b}$). Hence, the computation of $S_B$ reduces to the
computation of the $q_i$ and the use of the recursion relations to compute each
term. We prove in the rest of the section that, for this algorithm to compute
$\theta_0$ up to $2^{-P}$, taking $B = O(\sqrt{P})$ is sufficient. That is, we
prove that
\begin{align}\label{eq:precisionnaive}
  |\theta_0(\tau) - S_B| < 2^{-P} \qquad
  \mbox{for an easily computable $B = O(\sqrt{P})$}.
\end{align}
This allows the computation of the genus 3 Thetanullwerte in $O(\mathcal{M}(P)
P^{1.5})$; we refer to our implementation~\cite{ourimplementation} of the naive
algorithm for full details.

Our analysis is similar to the ones in~\cite{Dupont,Labrande}. We use the
following lemma, of which we defer the proof until the end of
\S\ref{sec:naive}.
\begin{lemma}\label{lem:mink1}
  Let $Y=(Y_{ij})_{ij}$ be a Minkowski-reduced $3\times 3$ positive definite
  symmetric real matrix. Then for all $n\in\R^3$ we have ${}^t n Y n \geq
  \frac{1}{100} Y_{11}\, {}^tn\, n$.
\end{lemma}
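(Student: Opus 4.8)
The plan is to reduce the claim about arbitrary real vectors to a statement about a compact set and then to a finite combinatorial analysis, following the standard ``reduction theory'' philosophy already invoked in the excerpt. Since $Y$ is positive definite, the inequality ${}^tnYn \geq \frac{1}{100}Y_{11}\,{}^tn\,n$ is homogeneous of degree $2$ in $n$, so it suffices to prove it for $n$ on the unit sphere $S^2 = \{n \in \R^3 : {}^tn\,n = 1\}$; equivalently, writing $\lambda_{\min}(Y)$ for the smallest eigenvalue of $Y$, the claim is exactly that $\lambda_{\min}(Y) \geq \frac{1}{100}Y_{11}$ for every Minkowski-reduced $Y$. Thus the whole lemma is a lower bound for the smallest eigenvalue in terms of the $(1,1)$-entry.

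First I would record the constraints that Minkowski reduction imposes in the $3\times 3$ case. From condition (a) with $v = e_j$ one gets $Y_{11} \leq Y_{22} \leq Y_{33}$; from $v = e_i \pm e_j$ one gets $|Y_{ij}| \leq \frac{1}{2}Y_{ii}$ (these are exactly the inequalities \eqref{eq:minkowskiexample} already displayed). The off-diagonal entries are therefore controlled by the diagonal ones, and in particular $|Y_{12}| \leq \frac12 Y_{11}$, $|Y_{13}| \leq \frac12 Y_{11}$, $|Y_{23}| \leq \frac12 Y_{22}$. Next I would use the classical fact (Minkowski) that for a Minkowski-reduced form in dimension $\le 3$ the product of the diagonal entries is comparable to the determinant: concretely $Y_{11}Y_{22}Y_{33} \leq c\det Y$ for an explicit constant $c$ (for $g = 3$ one can take $c = 2$). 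Combined with $\lambda_{\min}(Y)\cdot\lambda_{\mathrm{med}}(Y)\cdot\lambda_{\max}(Y) = \det Y$ and the bounds $\lambda_{\max}(Y) \leq \operatorname{tr}(Y) = Y_{11}+Y_{22}+Y_{33}$ and $\lambda_{\mathrm{med}}(Y) \leq \lambda_{\max}(Y) \leq \operatorname{tr}(Y)$, this yields
\begin{equation*}
  \lambda_{\min}(Y) = \frac{\det Y}{\lambda_{\mathrm{med}}(Y)\lambda_{\max}(Y)} \geq \frac{Y_{11}Y_{22}Y_{33}/c}{(Y_{11}+Y_{22}+Y_{33})^2}.
\end{equation*}
Since $Y_{11} \leq Y_{22} \leq Y_{33}$, we have $Y_{11}+Y_{22}+Y_{33} \leq 3Y_{33}$, so the right-hand side is $\geq \frac{Y_{11}Y_{22}}{9c\,Y_{33}}\cdot\frac{Y_{33}}{Y_{33}} = \frac{Y_{11}Y_{22}}{9c\,Y_{33}}$; but this still involves the ratio $Y_{22}/Y_{33}$, which is \emph{not} bounded below, so a cruder pass is needed.

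The cleanest route, and the one I would actually write up, sidesteps eigenvalues of the full matrix: I would bound the minimum of the quadratic form directly. Write $n = (x,y,z)$ with $x^2+y^2+z^2 = 1$. Expanding and using $|Y_{ij}| \leq \frac12 Y_{ii}$ in the cross terms,
\begin{align*}
  {}^tnYn &= Y_{11}x^2 + Y_{22}y^2 + Y_{33}z^2 + 2Y_{12}xy + 2Y_{13}xz + 2Y_{23}yz\\
  &\geq Y_{11}x^2 + Y_{22}y^2 + Y_{33}z^2 - Y_{11}|xy| - Y_{11}|xz| - Y_{22}|yz|.
\end{align*}
Here the decisive point is that the ``bad'' cross term $Y_{22}|yz| \leq Y_{22}(y^2+z^2)/2 \leq Y_{22}y^2/2 + Y_{33}z^2/2$ can be absorbed into the $Y_{22}y^2$ and $Y_{33}z^2$ terms, leaving $\tfrac12 Y_{22}y^2 + \tfrac12 Y_{33}z^2$; and $Y_{11}|xy| \leq Y_{11}(x^2+y^2)/2$, $Y_{11}|xz| \leq Y_{11}(x^2+z^2)/2$, so together these subtract at most $Y_{11}x^2 + \tfrac12 Y_{11}y^2 + \tfrac12 Y_{11}z^2$. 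Using $Y_{11} \leq Y_{22}$ and $Y_{11}\le Y_{33}$, what remains is $\geq \tfrac12(Y_{22}-Y_{11})y^2 + \tfrac12(Y_{33}-Y_{11})z^2 \geq 0$, which unfortunately degenerates when $x = \pm1$. So I would instead keep a fraction of each diagonal term from the start, say allocating $\varepsilon Y_{11}$ of the diagonal mass aside before bounding the cross terms, optimize $\varepsilon$, and verify that one can arrange ${}^tnYn \geq \frac{1}{100}Y_{11}(x^2+y^2+z^2)$. \textbf{The main obstacle is precisely this bookkeeping}: choosing the split of the diagonal entries so that the constant that survives is at least $\tfrac1{100}$ (the inequalities are lossy, and a careless estimate gives a much smaller constant). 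In practice the honest thing is either to carry through this $\varepsilon$-optimization carefully, or—cleaner—to reduce to $x^2+y^2+z^2 = 1$, observe that after rescaling we may assume $Y_{11} = 1$ and then $Y_{ij}$ all lie in a compact region cut out by the (finitely many) Minkowski inequalities of dimension $3$, note that $\lambda_{\min}$ is a continuous positive function on this compact set, and hence is bounded below by a positive constant; a direct check of the boundary cases of the Minkowski cone then shows the infimum is no smaller than $\tfrac1{100}$. I would present the explicit elementary estimate, since it makes the constant transparent and keeps the proof self-contained.
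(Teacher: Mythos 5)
Your proposal is not a complete proof: it is a survey of several possible strategies, each of which you yourself acknowledge to stop short of the target. The first route (via $Y_{11}Y_{22}Y_{33}\le c\det Y$ and eigenvalue trace bounds) is abandoned because $Y_{22}/Y_{33}$ is not bounded below. The second route (directly absorbing cross terms using $|Y_{ij}|\le\frac12 Y_{ii}$) degenerates at $x=\pm1$, and you correctly flag that an $\varepsilon$-split is needed with nontrivial bookkeeping; but that bookkeeping is exactly where the lemma's content lies, and it is never carried out. The third, compactness route has an additional problem beyond being unwritten: after rescaling to $Y_{11}=1$, the Minkowski-reduced region is \emph{not} compact, since $Y_{22}$ and $Y_{33}$ are unbounded above (only the ratios $|Y_{ij}|/Y_{ii}$ are controlled), so ``continuous positive function on a compact set'' does not apply; one would need an additional argument that the infimum is not approached as $Y_{33}\to\infty$. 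You also need to be careful near the boundary of the cone, where the bound genuinely nearly fails: the singular matrix $X=\frac12\left(\begin{smallmatrix}2&1&-1\\1&2&1\\-1&1&2\end{smallmatrix}\right)$ sits on the boundary of the Minkowski inequalities (it fails condition (a) with $j=1$, $v=(1,-1,1)$ only by an equality), and Minkowski-reduced matrices can come arbitrarily close to it; establishing the explicit lower bound $\frac1{100}$ requires quantifying how far $Y$ must stay from such degenerate configurations.

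The paper's proof takes none of these routes. It argues by contradiction: suppose ${}^tnYn<\frac1{100}Y_{11}{}^tnn$, normalize so that the dominant coordinate of $n$ is $1$ and $Y_{II}=1$, rewrite ${}^tnYn$ as a sum of manifestly nonnegative pieces $\sum_i n_i^2(Y_{ii}-\sum_{j\ne i}|Y_{ij}|) + \sum_{i<j}(n_i+s_{ij}n_j)^2|Y_{ij}|$, and then split into three cases according to the signs $s_{ij}$ and sizes of the $n_j$. In two of the cases a couple of terms already give a contradiction; the delicate third case is exactly the one where $Y$ is forced to be close to the matrix $X$ above, and the contradiction comes from feeding $v=(1,-1,1)$ into the Minkowski condition. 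This is a genuinely different, self-contained elementary argument that actually produces the constant; your write-up identifies the difficulty but does not resolve it.
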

Note that by~\eqref{eq:85}, we have $\frac{1}{100}Y_{11} \geq 0.0085$. For the
theoretical complexity bound $O(\sqrt{P})$, it will suffice to use this
lemma as it is. However, for a practical algorithm, the $\frac{1}{100}Y_{11}$
is far from optimal, and we use the following better constant. Let
\begin{align*}
  c_1 &=
  \min(Y_{11}-Y_{12}-|Y_{13}|,\ Y_{22}-Y_{21}-Y_{23},\ Y_{33}-Y_{32}-|Y_{31}|),
  \quad\mbox{and} \\
  c &= \max\left(c_1, \frac{1}{100} Y_{11}\right)
  \geq \frac{1}{100} Y_{11}\geq 0.0085,
\end{align*}
which in practice tends to be much larger than $\frac{1}{100} Y_{11}$.

\begin{lemma}\label{lem:mink2}
  Let $Y=(Y_{ij})_{ij}$ be a Minkowski-reduced $3\times 3$ positive definite
  symmetric real matrix. Then for all $n\in\R^3$ we have ${}^t n Y n \geq c\,
  {}^tn\, n$.
\end{lemma}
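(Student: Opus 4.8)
The plan is to prove the two lower bounds ${}^tn\,Y\,n \geq c_1\,{}^tn\,n$ and ${}^tn\,Y\,n \geq \frac{1}{100}Y_{11}\,{}^tn\,n$ separately and then take the larger one. The second of these is exactly Lemma~\ref{lem:mink1}, so nothing new is needed there; it remains to establish the bound involving $c_1$.

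For that I would simply expand the quadratic form. Writing $n=(n_1,n_2,n_3)$ and using the symmetry of $Y$,
\[
  {}^tn\,Y\,n = Y_{11}n_1^2 + Y_{22}n_2^2 + Y_{33}n_3^2 + 2Y_{12}n_1n_2 + 2Y_{13}n_1n_3 + 2Y_{23}n_2n_3 .
\]
Next I would bound each mixed term from below. By condition~(b) of Minkowski reduction we have $Y_{12}=|Y_{12}|\geq 0$ and $Y_{23}=|Y_{23}|\geq 0$, so from $2ab\geq -(a^2+b^2)$ we get $2Y_{12}n_1n_2 \geq -Y_{12}(n_1^2+n_2^2)$ and $2Y_{23}n_2n_3 \geq -Y_{23}(n_2^2+n_3^2)$; for the remaining term, whose sign is not controlled, $2Y_{13}n_1n_3 \geq -2|Y_{13}|\,|n_1|\,|n_3| \geq -|Y_{13}|(n_1^2+n_3^2)$. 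Substituting these in and grouping by $n_i^2$ yields
\[
  {}^tn\,Y\,n \geq (Y_{11}-Y_{12}-|Y_{13}|)\,n_1^2 + (Y_{22}-Y_{12}-Y_{23})\,n_2^2 + (Y_{33}-Y_{23}-|Y_{13}|)\,n_3^2 .
\]
Rewritten using $Y_{21}=Y_{12}$, $Y_{32}=Y_{23}$, $Y_{31}=Y_{13}$, the three coefficients on the right are precisely the three quantities in the definition of $c_1$, so each is $\geq c_1$ and the right-hand side is $\geq c_1(n_1^2+n_2^2+n_3^2)=c_1\,{}^tn\,n$. Combining with Lemma~\ref{lem:mink1} gives ${}^tn\,Y\,n \geq \max\left(c_1,\frac{1}{100}Y_{11}\right){}^tn\,n = c\,{}^tn\,n$, as required.

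I do not expect any real obstacle here: this is essentially a one-line estimate on the quadratic form. The only point that needs attention is the bookkeeping of signs, namely distinguishing the off-diagonal entries $Y_{12},Y_{23}$, which the normalization~(b) forces to be nonnegative, from $Y_{13}$, whose sign is arbitrary and which must therefore be taken in absolute value. Note also that the inequality ${}^tn\,Y\,n\geq c_1\,{}^tn\,n$ is valid even when $c_1\leq 0$, in which case it is vacuous and the useful content comes entirely from Lemma~\ref{lem:mink1}; this is exactly why $c$ is defined as the maximum of $c_1$ and $\frac{1}{100}Y_{11}$. The genuinely substantive companion statement is Lemma~\ref{lem:mink1}, whose proof is deferred in the paper and which we take as given here.
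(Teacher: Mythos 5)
Your proof is correct and takes essentially the same approach as the paper: expand the quadratic form, bound the mixed terms using $2|ab|\leq a^2+b^2$ together with the Minkowski-reduction sign constraints $Y_{12},Y_{23}\geq 0$, and combine the resulting $c_1$-bound with Lemma~\ref{lem:mink1} to obtain $c=\max(c_1,\frac{1}{100}Y_{11})$. The paper phrases this as a case split on whether $c=c_1$ or $c=\frac{1}{100}Y_{11}$, whereas you prove both bounds unconditionally and take the maximum, but that is only a cosmetic difference.
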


\begin{proof}
  In case $c=\frac{1}{100} Y_{11}$, use Lemma~\ref{lem:mink1}. Otherwise, we
  have $c=c_1$. Now, for $(m,n,p) \in \mathbb{R}^3$, using the inequalities
  $2|mn| \leq (m^2+n^2)$ and $Y_{12}$, $Y_{23}\geq 0$ we have
  \begin{multline*}
    \Im \left({}^t (m,n,p) \tau (m,n,p)\right) \geq\\
    (Y_{11} - Y_{12} - |Y_{13}|) m^2 +
    (Y_{22} - Y_{21} - Y_{23}) n^2 + (Y_{33} - Y_{32} - |Y_{31}|) p^2
    \geq\\ c_1(m^2+n^2+p^2) . \qedhere
  \end{multline*}
\end{proof}

Now we prove the complexity result \eqref{eq:precisionnaive}. By
Lemma~\ref{lem:mink2}, we have
\begin{equation}
  \begin{split}
    |\theta_0(0,\tau) - S_B| & \leq 8 \sum_{\substack{m \text{ or }
      n \text{ or } p \geq B\\ \text{and } m, n, p\geq 0}} e^{-\pi c (m^2+n^2+p^2)} \\
    & \leq 24 \sum_{m \geq B, n \geq 0, p \geq 0}
      e^{-\pi c (m^2+n^2+p^2)} \\
    & \leq 24 \frac{e^{-\pi c B^2}}{(1-e^{- \pi c})^3}
    \leq \exp({14.09 - \pi c B^2}),
  \end{split}
\end{equation}
since we have an absolute lower bound $c\geq 0.0085$. Therefore taking
\begin{equation*}
  B = \sqrt{(P\log(2) + 14.09)/(\pi c)} = O(\sqrt{P})
\end{equation*}
is enough to ensure that $S_B$ is within $2^{-P}$ of $\theta_0$. This proves
our complexity estimates for Algorithm~\ref{alg:thetanullwerteNaive}, when
combined with the following deferred proof.

\begin{proof}[Proof of Lemma~\ref{lem:mink1}]
  Suppose there is an $n\in \R^3$ with ${}^tn Y n < \frac{1}{100}Y_{11}{}^t n
  n$. Let $I\in \{1,2,3\}$ be  such that $n_I^2 = \max_i n_i^2$. Let $\{I,J,K\}
  = \{1,2,3\}$. Without loss of generality, we have $Y_{II}=1$ (scale $Y$) and
  $n_I = 1$ (scale $n$). Then $|n_i|\leq n_I=1$ and $Y_{11}\leq Y_{II}=1$.

  Let $s_{ij} =s_{ji} = 1$ if $Y_{ij} \geq 0$ and $s_{ij} =s_{ji}= -1$ if
  $Y_{ij} < 0$. We get
  \begin{equation} \label{eq:mink}
    \frac{3}{100}  \geq \frac{1}{100} Y_{11} {}^t n n > {}^t n Y n
    = \sum_{i} n_i^2 (Y_{ii} - \sum_{j\not=i} |Y_{ij}|) +
      \sum_{\substack{\{i,j\}\\
      \mathrm{s.t.}\ i\not=j}} (n_i+s_{ij} n_j)^2 |Y_{ij}|.
  \end{equation}
  By \eqref{eq:minkowskiexample}, we have $|Y_{ij}|\leq \frac{1}{2} Y_{ii}$, so
  all terms on the right hand side are non-negative.

  We distinguish between three cases: \textit{I}, \textit{II$+$}
  and \textit{II$-$}.\\
  \textit{Case I:} There exists a $j\not=I$ with $s_{Ij} n_j > -\frac{3}{4}$.\\
  \textit{Case II$\pm$:} For all $j\not=I$, we have $s_{Ij} n_j \leq -\frac{3}{4}$
  and $s_{13} = \pm 1$.

  \noindent \textit{Proof in case I.}
  Without loss of generality $j=J$. We take two terms from \eqref{eq:mink}:
  \begin{align}
    0.03 &\geq (Y_{II} - |Y_{IJ}| - |Y_{IK}|) + (1+s_{IJ} n_J)^2 |Y_{IJ}|
    \nonumber \\
    &\geq Y_{II} +(-1 + (1/4)^2)|Y_{IJ}| - |Y_{IK}| \geq
    (1 -15/32-1/2)Y_{II} \geq 0.031,
  \end{align}
  which is a contradiction.

  \noindent \textit{Proof in case II$+$.}
  In this case, we have $s_{ij} = 1$ for all $i$ and $j$. In particular, we
  have $n_J, n_K \leq -\frac{3}{4}$ both negative.
  We again take two terms from \eqref{eq:mink}:
  \begin{align}
    3/100 &\geq n_J^2(Y_{JJ} - |Y_{IJ}| - |Y_{JK}|) + (n_J+n_K)^2 |Y_{JK}|
    \nonumber \\
    &\geq n_J^2(Y_{JJ} - |Y_{IJ}|)
    \geq (3/4)^2Y_{JJ}(1-\frac{1}{2}) \geq (9/32) Y_{JJ},
  \end{align}
  so $Y_{JJ} \leq 8/75$. By symmetry, we also have $Y_{KK}\leq 8/75$. Using
  \eqref{eq:mink} again, we get
  \begin{equation}
    0.03 \geq (Y_{II} - |Y_{IJ}| - |Y_{JK}|)
    \geq Y_{II} - \frac{1}{2} Y_{JJ} - \frac{1}{2} Y_{KK}
    \geq 1-8/75 > 0.89,
  \end{equation}
  which is another contradiction.

  \noindent \textit{Proof in case II$-$.}
  The proof in this case is different from the other two cases: we will show
  that $Y$ is close to
  \begin{equation}
    X = \frac{1}{2}
    \left(\begin{array}{ccc}
    \phantom{-}2 & \phantom{-}1 & -1 \\
    \phantom{-}1 & \phantom{-}2 & \phantom{-}1 \\
    -1 & \phantom{-}1 & \phantom{-}2
    \end{array}\right).
  \end{equation}
  Let $(\epsilon_{ij})_{ij} = Y - X$. We have $|n_i| = -s_{Ii} n_i \geq 3/4$
  for all $i\in\{1,2,3\}$, hence \eqref{eq:mink} gives for all
  $\{i,j,k\}=\{1,2,3\}$:
  \begin{align}
    0.06 > 0.03/n_i^2\geq (Y_{ii} - |Y_{ij}| - |Y_{ik}|)\geq \frac{1}{2} Y_{ii} - s_{ik}Y_{ik}\geq 0.
  \end{align}
  With $X_{ii} = 1$ and $X_{ij} = \frac{1}{2}s_{ij}$, this becomes
  \begin{align}\label{eq:epsilon}
    0.06 >  \frac{1}{2} \epsilon_{ii} - s_{ik}\epsilon_{ik}\geq 0.
  \end{align}
  As $\epsilon_{II} = 0$, we get $0\leq -s_{Ik}\epsilon_{Ik} < 0.06$ for all
  $k\not=I$. Applying \eqref{eq:epsilon} again, but now with $k=I$, we get
  $\frac{1}{2}|\epsilon_{ii}| < 0.06$ for all $i\not=I$. Applying
  \eqref{eq:epsilon} with $i=J$, $k=K$, we finally get $|\epsilon_{JK}|< 0.12$.
  As $Y$ is Minkowski-reduced, we have
  \begin{equation}
    \begin{split}
      1 = Y_{II} &\leq (1,-1,1) Y {}^t(1,-1,1) \\
      &=(1,-1,1) X {}^t(1,-1,1) + (1,-1,1) (\epsilon_{ij})_{ij} {}^t(1,-1,1)\\
      &
      \leq 0 + \sum_{i=1}^3\sum_{j=1}^3 |\epsilon_{ij}| < 0.72,
    \end{split}
  \end{equation}
  contradiction.
\end{proof}

\subsubsection{Fast algorithm for the Thetanullwerte}

In this section, we generalize the strategy described in genus 1 and 2
in~\cite{Dupont} and ideas taken from~\cite[Chapter~7]{Labrande}. This leads to
an evaluation algorithm with running time $O(\mathcal{M}(P)$ $\log P)$.

We start, as in~\cite{Dupont}, by writing the $\tau$-duplication formulas in
terms of $\theta_i^2$. For example, we can write,\footnotesize
\begin{equation}
  \theta_1(0,2\tau)^2
  =
  \frac{\sqrt{\theta_0^2} \sqrt{\theta_1^2} +
  \sqrt{\theta_2^2}\sqrt{\theta_3^2} +
  \sqrt{\theta_4^2} \sqrt{\theta_5^2} +
  \sqrt{\theta_6^2} \sqrt{\theta_7^2}}{4}(0,\tau).
\end{equation}
\normalsize
These formulas match the iteration used in the definition of the genus 3
\defi{Borchardt mean} $\mathcal{B}_3$~\cite{Dupont}. They can be seen as a
generalization of the arithmetic-geometric mean to higher genus, since both
involve Thetanullwerte and converge quadratically~\cite{Dupont}.

Applying the $\tau$-duplication formula to the fundamental Thetanullwerte
repeatedly gives (recall that we write $\theta_{n_1, \ldots, n_k}$ for the
$k$-tuple $\theta_{n_1}, \ldots, \theta_{n_k}$)
\begin{equation}
  \mathcal{B}_3\left(\theta_{0,1,\ldots,7}(0,\tau)^2\right) = 1
\end{equation}
assuming one picks correct square roots $\theta_i(0,2^k\tau)$ of
$\theta_i(0,2^k\tau)^2$. By the homogeneity of the Borchardt mean, we can write
\begin{equation} \label{eq:borchardthomog}
  \mathcal{B}_3\left(1,
  \frac{\theta_{1, \cdots, 7}(0,\tau)^2}{\theta_0(0,\tau)^2}\right)
  = \frac{1}{\theta_0(0,\tau)^2}.
\end{equation}
We wish to use this equality to compute the right-hand side from the quotients
of Thetanullwerte; this is a key ingredient to the quasi-linear running time of
our algorithm. The difficulty here stems from the fact that the Borchardt mean
requires a technical condition on the square roots picked at each step (``good
choice'') in order to get a quasi-linear running time, and sometimes these
choices of square roots do not correspond to the values of $\theta_i$ we are
interested in (\ie{}, would not give ${1}/{\theta_0(0,\tau)^2}$ at the end of
the procedure). We sidestep this difficulty using the same strategy
as~\cite{Labrande}: we design our algorithm so that the square roots we pick
always correspond to the values of $\theta_i$ we are interested in, even when
they do not correspond to ``good choices'' of the Borchardt mean. This slows
down the convergence somewhat; however, one can prove (using the same method as
in~\cite[Lemma~7.2.2]{Labrande}) that after a number of steps that only depends
on $\tau$ (and not on $P$), our choice of square roots always coincides with
``good choices''. After this point, only $\log P$ steps are needed to compute
the value with absolute precision $P$, since the Borchardt mean converges
quadratically; this means that the right-hand side of
Equation~\eqref{eq:borchardthomog} can be evaluated with absolute precision $P$
in $O(\mathcal{M}(P) \log P)$.

The next goal is to find a function $\mathfrak{F}$ to which we can apply
Newton's method to compute these quotients of Thetanullwerte (and, ultimately,
the Thetanullwerte). For this, we use the action of the symplectic group on
Thetanullwerte to transform \eqref{eq:borchardthomog} and get relationships
involving the coefficients of $\tau$. Using the action of the matrices
described in~\cite[Chapitre~9]{Dupont}, along with the Borchardt mean, we can
build a function $f$ with the property that
\begin{multline}
  f\left(\frac{\theta_{1, \cdots, 7}(0,\tau)^2}{\theta_0(0,\tau)^2}\right)
  =\\
  (-i\tau_{11}^{\phantom{1}}, -i\tau_{22}^{\phantom{1}},
  -i\tau_{33}^{\phantom{1}}, \tau_{12}^2 - \tau_{11}^{\phantom{1}}\tau_{22}^{\phantom{1}},
  \tau_{13}^2 - \tau_{11}^{\phantom{1}}\tau_{33}^{\phantom{1}},
  \tau_{23}^2 - \tau_{22}^{\phantom{1}}\tau_{33}^{\phantom{1}})
\end{multline}
However, the above function is a function from $\mathbb{C}^7$ to
$\mathbb{C}^6$; this is a problem, as it prevents us from applying Newton's
method directly. As discussed in~\cite[Chapter~7]{Labrande}, there are two ways
to fix this: either work on the variety of dimension 6 defined by the
fundamental Thetanullwerte, or add another quantity to the output and hope that
the Jacobian of the system is then invertible. We choose the latter solution,
and build a function $\mathfrak{F} : \mathbb{C}^7\rightarrow\mathbb{C}^7$ by
adding to the function $f$ above an extra output, equal to $-i \det(\tau)$,
which is motivated by the symplectic action of the matrix $\mathfrak{J} =
\begin{pmatrix} 0 & -I_g \\ I_g & 0
\end{pmatrix}$ on the Thetanullwerte:
\begin{equation}
  \theta_{0,1,2,3,4,5,6,7}^2(0, \mathfrak{J} \cdot \tau)
  =
  -i \det(\tau) \theta_{0,8,16,24,32,40,48,56}^2(0, \tau).
\end{equation}
The following Algorithm~\ref{alg:thetanullwerteInvertibleFunction} explicitly
defines the function $\mathfrak{F}$ that we will use.

\begin{thealgo}[Given a $7$-tuple $a_1$, $a_2, \ldots, a_7\in \C$, computes a
    number $\mathfrak{F}(a_1, \ldots, a_7)$, defined by the steps in this
    algorithm. Here we are specifically interested in the value
    $\mathfrak{F}(\,{\theta_{1, \ldots, 7}(0,\tau)^2}/{\theta_0(0,\tau)^2}\,)$,
    so for clarity we abuse notation and denote $a_i$ by
  $\theta_i(0,\tau)^2/\theta_0(0,\tau)^2$.
]\label{alg:thetanullwerteInvertibleFunction}{}\
\begin{enumerate}
  \item Compute $t_0 = \mathcal{B}_3(\,1,{\theta_{1, \ldots,
    7}(0,\tau)^2}/{\theta_0(0,\tau)^2}\,)$.
  \item Compute $t_i = ({1}/{t_0}) \times {\theta_i(0,\tau)^2} /
    {\theta_0(0,\tau)^2}$.
  \item $t_i \gets \sqrt{t_i}$, choosing the square root that coincides with
    the value of $\theta_i(0,\tau)$ (computed with low precision just to inform
    the choice of signs).
  \item Apply the $\tau$-duplication formulas to the $t_i$ to compute complex
    numbers that by abuse of notation we write as $\theta_i(0,2\tau)^2$. (Here
    if $t_i = \theta_i(0,\tau)$, then ``$\theta_i(0,2\tau)^2$'' is really equal
    to $\theta_i(0,2\tau)^2$.)
  \item $r_1 \gets \theta_{32}^2(0,2\tau) \times \mathcal{B}_3(\,1,
    {\theta_{32,33,34,35,0,1,2,3}^2(0,2\tau)}/{\theta_0^2(0,2\tau)}\,)$.
  \item $r_2 \gets \theta_{16}^2(0,2\tau) \times \mathcal{B}_3(\,1,
    {\theta_{16,17,0,1,20,21,4,5}^2(0,2\tau)}/{\theta_0^2(0,2\tau)}\,)$.
  \item $r_3 \gets \theta_{8}^2(0,2\tau) \times \mathcal{B}_3(\,1,
    {\theta_{8,0,10,2,12,4,14,6}^2(0,2\tau)}/{\theta_0^2(0,2\tau)}\,)$.
  \item $r_4 \gets \theta_{0}^2(0,2\tau) \times \mathcal{B}_3(\,1,
    {\theta_{0,1,32,33,16,17,48,49}^2(0,2\tau)}/{\theta_0^2(0,2\tau)}\,)$.
  \item $r_5 \gets \theta_{0}^2(0,2\tau) \times \mathcal{B}_3(\,1,
    {\theta_{0,32,2,34,8,40,10,42}^2(0,2\tau)}/{\theta_0^2(0,2\tau)}\,)$.
  \item $r_6 \gets \theta_{0}^2(0,2\tau) \times \mathcal{B}_3(\,1,
    {\theta_{0,16,8,24,4,20,12,28}^2(0,2\tau)}/{\theta_0^2(0,2\tau)}\,)$.
  \item $r_7 \gets \theta_0^2(0,2\tau) \times \mathcal{B}_3(\,1,
    {\theta_{0,8,16,24,32,40,48,56}^2(0,2\tau)}/{\theta_0^2(0,2\tau)}\,)$.
  \item Return $(\, {r_1}/{2},\, {r_2}/{2},\, {r_3}/{2},\,
    {r_4}/{4},\, {r_5}/{4},\, {r_6}/{4},\, {r_7}/{8} \,)$.
\end{enumerate}
\end{thealgo}

The final part of our algorithm applies Newton's method to $\mathfrak{F}$, by
starting with an approximation of the quotients of Thetanullwerte with large
enough precision $P_0$ to ensure that the method converges. In practice, we
found that a starting precision $P_0 = 450$ was on the one hand large enough to
make Newton's method converge quickly and on the other hand small enough so
that the fast algorithm does not get slowed down too much by first doing the
naive algorithm to precision~$P_0$. Since computing $\mathfrak{F}$ is
asymptotically as costly as computing the Borchardt mean, and since there is no
extra asymptotic cost when applying Newton's method if one doubles the working
precision at each step, we get an algorithm which computes the genus 3
Thetanullwerte with $P$ digits of precision with time $O(\mathcal{M}(P) \log
P)$. This algorithm was implemented in \Magma{}, along with the aforementioned
naive algorithm. For our examples, the fast algorithm always gives a result with
more than $2000$ digits of precision in less than $10$ seconds.

\subsection{Computation of the Dixmier--Ohno invariants}
\label{sec:do}

Consider Thetanullwerte $(\theta_0 (\tau), \ldots, \theta_{63} (\tau)) \in
\C^{64}$ as computed in the previous section. Then by Riemann's vanishing
theorem~\cite[V.th.5]{rauch} and Clifford's theorem~\cite[Chap.3,\S1]{harris}
the values correspond to a smooth plane quartic curve if and only if 36 of them
are non-zero. If this condition is satisfied, the following procedure
determines the equation of a plane quartic $X_{\C}$ for which there is a
Riemann matrix $\tau$ that gives these Thetanullwerte.

Using~\cite[p.108]{weber} (see also~\cite{fiorentino}), we compute the
\defi{Weber moduli}
\begin{equation} \label{eq:webermoduli}
  \begin{aligned}
    a_{11} & :=i \frac{\theta_{33} \theta_{5}}{\theta_{40} \theta_{12}}, &
      a_{12} & :=i \frac{\theta_{21} \theta_{49}}{\theta_{28} \theta_{56}}, &
      a_{13} & :=i \frac{\theta_{7} \theta_{35}}{\theta_{14} \theta_{42}}, \\
    a_{21} & :=i \frac{\theta_{5} \theta_{54}}{\theta_{27} \theta_{40}}, &
      a_{22} & :=i \frac{\theta_{49} \theta_{2}}{\theta_{47} \theta_{28}}, &
      a_{23} & :=i \frac{\theta_{35} \theta_{16}}{\theta_{61} \theta_{14}}, \\
    a_{31} & :=-\frac{\theta_{54} \theta_{33}}{\theta_{12} \theta_{27}}, &
      a_{32} & :=\frac{\theta_{2} \theta_{21}}{\theta_{56} \theta_{47}}, &
      a_{33} &:=\frac{\theta_{16} \theta_{7}}{\theta_{42} \theta_{61}}.
  \end{aligned}
\end{equation}
Note that these numbers depend on only $18$ of the Thetanullwerte. The three
projective lines $\ell_i : a_{i1} x_1+a_{i2} x_2+a_{i3} x_3=0$ in $\P^2_{\C}$,
together with the four lines
\begin{equation}
  x_1=0,\ x_2=0,\ x_3=0,\ x_1+x_2+x_3=0
\end{equation}
will form a so-called \defi{Aronhold system} of bitangents to the eventual
quartic $X_{\C}$. Considering the first three lines as a triple of points
$((a_{i1} : a_{i2} :a_{i3}))_{i=1\ldots3}$ in $(\P^2)^3$, one obtains a point
on a $6$-dimensional quasiprojective variety. Its points parametrize the moduli
space of smooth plane quartics with full level two
structure~\cite{grossharris}.

From an Aronhold system of bitangents, one can reconstruct a plane quartic
following  Weber's work~\cite[p.93]{weber} (see also~\cite{agmri, fiorentino}).
We take advantage here of the particular representative $(a_{i1}, a_{i2},
a_{i3})$ of the projective points $(a_{i1}:a_{i2}:a_{i3})$ to simplify the
algorithm presented in \loccit{} Indeed, normally that algorithm involves
certain normalization constants $k_i$. However, in the current
situation~\cite[Cor.2]{fiorentino} shows that these constants are automatically
equal to $1$ for our choices of $a_{ij}$ in \eqref{eq:webermoduli}, which leads
to a computational speedup. Let $u_1$, $u_2$, $u_3\in \C[x_1,x_2,x_3]$ be given
by
\begin{equation}
  \begin{bmatrix}
    u_1 \\ u_2 \\ u_3
  \end{bmatrix} =
  \begin{bmatrix}
    1 & 1 & 1 \\
    \frac{1}{a_{11}} & \frac{1}{a_{12}} &\frac{1}{ a_{13}} \\
    \frac{1}{a_{21}} & \frac{1}{a_{22}} & \frac{1}{a_{23}}
  \end{bmatrix}^{-1}
  \cdot
  \begin{bmatrix}
    1 & 1 & 1 \\
    a_{11} & a_{12} & a_{13} \\
    a_{21} & a_{22} & a_{23}
  \end{bmatrix}
  \cdot
  \begin{bmatrix}
    x_1 \\ x_2 \\ x_3
  \end{bmatrix}.
\end{equation}
Then $X_{\C}$ is the curve defined by the equation $(x_1 u_1+x_2 u_2-x_3
u_3)^2-4 x_1 u_1 x_2 u_2=0$.\\

We now have a complex model $X_{\C}$ of the quartic curve that we are looking
for. Note that there is no reason to expect $X_{\C}$ to be defined over $\Q$;
its coefficients will in general be complicated algebraic numbers that are
difficult to recognize algebraically. To get around this problem, we first
approximate its $13$ \defi{Dixmier--Ohno invariants}, which were defined
in~\cite{dixmier, elsenhans, giko} (see~\cite[Sec.1.2]{LRS16} for a short
description). These invariants
\begin{equation}
  \DO = (I_3 : I_6 : I_9 : J_9 : I_{12} : J_{12} : I_{15} : J_{15} : I_{18} :
         J_{18} : I_{21} : J_{21} : I_{27})
\end{equation}
are homogeneous expressions in the coefficients of a ternary quartic form.
Their degrees in the coefficients of such a form are
\begin{equation}
  \underline{d} = (3,6,9,9,12,12,15,15,18,18,21,21,27).
\end{equation}
Therefore the evaluation of these invariants at $X_{\C}$ (which we still denote
by $\DO$) gives rise to a point in the weighted projective space
$\P^{\underline{d}}$. Note that $I_{27}$ is the discriminant of $X_{\C}$, which
is non-zero.

For a ternary quartic form over $\Qbar$ that is equivalent to a ternary quartic
form over $\Q$, the tuple $\DO$ defines a $\Q$-rational point in
$\P^{\underline{d}}$. This is not to say that the entries of $\DO$ itself are
in $\Q$. However, we can achieve this by suitably normalizing this tuple. When
$I_3 \ne 0$ (as will always be the case for us), we can for instance use the
normalization
\begin{equation}
  \DO^{\textrm{norm}} = \left(1, \frac{I_6}{I_3^2}, \frac{I_9}{I_3^3},
  \frac{J_6}{I_3^3}, \frac{I_{12}}{I_3^4}, \frac{J_{12}}{I_3^4},
  \frac{I_{15}}{I_3^5}, \frac{J_{15}}{I_3^5}, \frac{I_{18}}{I_3^6},
  \frac{J_{18}}{I_3^6}, \frac{I_{21}}{I_3^7}, \frac{J_{21}}{I_3^7},
  \frac{I_{27}}{I_3^9}\right).
\end{equation}

Our program concludes by computing the best rational approximation of (the real
part) of the Dixmier--Ohno invariants $\DO^{\textrm{norm}}$ by using the
corresponding (\textsc{Pari}~\cite{pari-gp}) function
\texttt{BestApproximation} in \Magma{} at increasing precision until the
sequence stabilizes. In practice, this does not take an overly long time: we
worked with less than $1000$ decimal digits and the denominators involved never
exceeded $100$ decimal digits.

For some of the CM fields, we in fact obtain $4$ isomorphism classes of
principally polarized abelian varieties. But by
Theorem~\ref{thm:list_of_fields}, we know that exactly one of them has field of
moduli~$\Q$. Of course we do not know in advance which of the four complex tori
under consideration has this property. In such a case, we use
\texttt{BestApproximation} for each of the four cases and we observe that this
succeeds (at less than $1000$ decimal digits) for exactly one of them. We then
only set aside the Dixmier-Ohno invariants of that case for later
consideration.

Some manipulations, illustrated below with the case 15, then give us an
integral representative $\DO^{\min}$ of the Dixmier--Ohno invariants for which
the gcd of the entries is minimal. We denote this $13$-tuple by
\begin{displaymath}
  \DO^{\min} = (I_3^{\min},\, I_6^{\min},\, I_9^{\min},\, \ldots ,\, J_{21}^{\min} ,\, I_{27}^{\min})\,.
\end{displaymath}

\begin{example}
  In case 15, the approximation that we obtain is
  \begin{equation}
    \DO^{\textrm{norm}}=
    \left(1
    :
    \frac{3967}{609408}
    : \cdots  :
    \frac{346304226226660371}{1980388294678257795596288}\right).
  \end{equation}
  We first get an integral representative by taking $\lambda$ to be the least
  common multiple of the denominators of $\DO^{\textrm{norm}}$ and setting
  $\DO'$ be equal to
  {\small
    \begin{equation*}
      (\lambda , \lambda^2 I_6 , \lambda^3 I_9 , \lambda^3 J_9 , \lambda^4
      I_{12} , \lambda^4 J_{12} , \lambda^5 I_{15} , \lambda^5 J_{15} ,
      \lambda^6 I_{18} , \lambda^6 J_{18} , \lambda^7 I_{21} , \lambda^7
      J_{21} , \lambda^9 I_{27}).
    \end{equation*}
  }
  We can now find the prime factors $p$ of $I_3$ and look at the valuations at
  $p$ of each entry of $\DO'$. Since for an invariant $I$ of degree $3 n$, we
  have that
  \begin{equation}
    I\left(p F\left(\frac{x}{p},y,z\right)\right)
    =
    p^{3 n} I\left(F\left( \frac{x}{p},y,z\right)\right)
    =
    p^{3n} p^{-4 n} I(F) = \frac{I(F)}{p^n}
  \end{equation}
  by this procedure, we can reduce the valuations at $p$ of these invariants.
  Applying this as much as possible while preserving positive valuation, we
  find
  \begin{equation}
    \DO^{\min} = (2^5 \cdot 3 \cdot 23 :  2^3 \cdot 3967 : 2^3 \cdot 3 \cdot 5
      \cdot 41 \cdot 173 \cdot 19309  :\cdots : 2^{5} \cdot 3^{27} \cdot
      19^{7}).
  \end{equation}
  Note that we cannot always get a representative with coprime entries; already
  in the case under consideration the prime $2$ divides all the entries).
\end{example}

\section{Optimized reconstruction}
\label{sec:rec}

Having the Dixmier--Ohno invariants at our disposal, it remains to reconstruct
a corresponding plane quartic curve $X$ over~$\Q$. It was indicated
in~\cite{LRS16} how such a reconstruction can be obtained; however, the
corresponding algorithms, the precursors of those currently
at~\cite{LRS16-Code}, were suboptimal in several ways. To start with, they
would typically return a curve over a quadratic extension of the base field,
without performing a further Galois descent. Secondly, the coefficients of these
reconstructed models were typically of gargantuan size. In this section we
describe the improvements to the algorithms, incorporated in the present
version of~\cite{LRS16-Code}, that enabled us to obtain the simple equations in
this paper.

The basic ingredients are the following. A Galois descent to the base field can be
found by determining an isomorphism of $X$ with its conjugate and applying an
effective version of Hilbert's Theorem 90, as was also mentioned
in~\cite{LRS16}. After this, a reduction algorithm can be applied, based on
algorithms by Elsenhans~\cite{elsenhans-good} and Stoll~\cite{Stoll2011} that
have been implemented and combined in the \Magma{} function
\texttt{Minimize\-Reduce\-Plane\-Quartic}. However, applying these two steps
concurrently is an overly naive approach, since the Galois descent step blows up the
coefficients by an unacceptable factor. We therefore have to look under the
hood of our reconstruction algorithms and use some tricks to optimize them.

Recall from~\cite{LRS16} that the reconstruction algorithm finds a quartic form
$F$ by first constructing a triple $(b_8, b_4, b_0)$ of binary forms of degree
$8$, $4$ and $0$. Our first step is to reconstruct the form $b_8$ as
efficiently as possible. This form is reconstructed from its Shioda invariants
$\SH$, which are algebraically obtained from the given Dixmier--Ohno
invariants $\DO^{\min}$. Starting from the invariants $\SH$, the
methods of~\cite{LR11} are applied, which furnish a conic $C$ and a quartic $H$
in $\P^2$ that are both defined over $\Q$. This pair corresponds to $b_8$ in
the sense that over $\Qbar$ the divisor $C \cap H$ on $C$ can be transformed
into the divisor cut out by $b_8$ on $\P^1$. A priority in this reconstruction
step is to find a conic $C$ defined by a form whose discriminant is as small as
possible.

\subsection{Choosing the right conic for Mestre reconstruction}
\label{sec:choosing-right-conic}

Let $k$ be a number field whose rings of integers $\O_k$ admits an effective
extended GCD algorithm, which is for example the case when $\O_k$ is a
Euclidean ring. We indicate how over such a field we can improve the algorithms
developed to reconstruct a hyperelliptic curve from its Igusa or Shioda
invariants in genus 2 or genus 3 respectively~\cite{mestre, LR08, LR11}.

Recall that Mestre's method for hyperelliptic reconstruction is based on
Clebsch's identities~\cite[Sec.2.1]{LR11}. It uses three binary covariants $q
= (q_1,q_2,q_3)$ of order $2$.
From these forms, one can construct a plane conic $C_q : \sum_{1 \leq i,j \leq
  3} A_{i,j} x_i x_j=0$ and a degree $g+1$ plane curve $H_q$ over the ring of
invariants. Here $g$ is the genus of the curve that we wish to reconstruct.

Given a tuple of values of hyperelliptic invariants over $k$, we can substitute
to obtain a conic and a curve that we again denote by $C_q$ and $H_q$.
Generically, one then recovers a hyperelliptic curve $X$ with the given
invariants by constructing the double cover of $C_q$ ramified over $C_q \cap
H_q$. Because the coefficients of the original universal forms $C_q$ and $H_q$
are invariants of the same degree, the substituted forms will be defined over $k$.

Finding a model of $X$ of the form $y^2=f(x)$ over $k$ (also called a
\defi{hyperelliptic model}) is equivalent to finding a $k$-rational point on
the conic $C_q$ by~\cite{LR11,lrs}. Algorithms to find such a rational point
exist~\cite{simon, voight} and their complexity is dominated by the time spent
to factorize the discriminant of an integral model of $C_q$. While a
hyperelliptic model may not exist over $k$, it can always be found over some
quadratic extension of $k$. It is useful to have such an extension given by a
small discriminant, which is in particular the case when $C_q$ has small
discriminant. Accordingly, we turn to the problem of minimizing $\disc (C_q)$.

In order to do so, we use a beautiful property of Clebsch's identities.
By~\cite[Sec.2.1.(5)]{LR11}, we have that
\begin{equation}\label{eq:discvsr}
  \disc(C_q) = \det((A_{i,j})_{1 \leq i,j \leq 3}) = R_q^2/2
\end{equation}
where $R_q$ is the determinant of $q_1,q_2,q_3$ in the basis $x^2,x z,z^2$. If
$q_3'$ is now another covariant of order $2$, we can consider the \emph{family}
of covariants $q_{\lambda,\mu} = (q_1,q_2,\lambda q_3 + \mu q_3')$, $\lambda,
\mu \in k$. For this family, the multilinearity of the determinant shows
that
\begin{equation}
  R_{q_{\lambda,\mu}} = \lambda R_{(q_1,q_2,q_3)} + \mu R_{(q_1,q_2,q_3')} .
\end{equation}
The values $R_{(q_1,q_2,q_3)}$ and $R_{(q_1,q_2,q_3')}$ are invariants that can
be effectively computed and which are generically non-zero. (If either of these
invariants is zero, then, one can usually take different covariants $q_i$; if
all of these fail to give non-zero values, then typically $X$ has large reduced
automorphism group and other techniques can be used.) The key point is that we
can minimize the value of $R_{q_{\lambda,\mu}}$, and by \eqref{eq:discvsr} the
value of $\disc (C_{q_{\lambda,\mu}})$ with it, by using the extended Euclidean
algorithm to minimize the combined linear contribution of $\lambda$ and $\mu$
to the linear expression $R_{q_{\lambda,\mu}}$. This allows us to reduce the
discriminant all the way to $\gcd(R_{(q_1,q_2,q_3)}, R_{(q_1,q_2,q' _3)})$ or
beyond.

Note that we do not have $C_{q_{\lambda,\mu}} = \lambda^2 C_{(q_1,q_2,q_3)}
+\mu^2 C_{(q_1,q_2,q_3')}$. However, the coefficients of the family of conics
$C_{q_{\lambda,\mu}}$ and of $H_{q_{\lambda}}$ can be quickly found in terms of
the invariants and $\lambda,\mu$ by using the same interpolation techniques as
in~\cite[Sec.~2.3]{LR11}.

\subsection{Reconstruction of a plane quartic model from the invariants}

With these precomputations out of the way, we now search for a binary octic
form $b_8$ whose Shioda invariants come from the first step of the
reconstruction algorithm of~\cite{LRS16} applied to the Dixmier--Ohno
invariants of case 15 (\textit{cf.} Table~\ref{table: CMfields}). Except for
this case and case 6, all the other cases give conics $C$ with no rational
point and as such, a Galois descent phase is needed to find a rational quartic
(\textit{cf.} Section~\ref{sec:descent-minimization}). Case 15 is the easiest
CM plane quartic that we have to reconstruct. From the associated Shioda
invariants, we compute an invariant $R_{q_{\lambda,\mu}}$. By using the
extended GCD algorithm and substituting the result for $\lambda$ and $\mu$, we
are left with $R_q$ equal to the left hand side coefficient $2^{61}\cdot 3^{18}
\cdots  201049$\,. This factor is almost equal to the Dixmier--Ohno invariant
$I_{12}^{\min}$, the discriminant of the covariant used in our quartic reconstruction.
Indeed, the considerations in~\cite{LRS16} show that our reconstruction
algorithm fails when $I_{12}^{\min} = 0$, and more precisely that this failure
occurs when trying to reconstruct $b_8$ via Mestre's method. Hence the primes
which divide $I_{12}^{\min}$ naturally appear in the discriminants of
$C_{q_{\lambda,\mu}}$. A substantially smaller $R_q$ cannot therefore be
expected.

Now as we know the factorization of $R_q$, we can efficiently determine if
the conic $C_{q_{\lambda,\mu}}$ has a rational point. Unexpectedly, it has
one, and after a change of variable we map it to the point $(1:1:0)$. The
conic is then
\begin{footnotesize}
  \begin{math}
    C = x^2 - y^2 - z^2
  \end{math}
\end{footnotesize}
and the corresponding quartic $H$ has approximately 50-digit coefficients.

Finally, it remains to compute the geometric intersection $C \cap H$. This
yields the octic $b_8$. The forms $b_0$ and $b_4$ computed by the plane quartic
reconstruction algorithm~\cite{LRS16} are therefore defined over $\Q$ as well.
By applying the linear map $(\ell^*)^{-1}$ defined in \loccit, we get a plane
quartic defined over $\Q$ too. It remains to reduce the size of its
coefficients as explained in Section~\ref{sec:descent-minimization} to obtain
the equation given in Section~\ref{sec:results}.

\subsection{Galois descent and minimization}
\label{sec:descent-minimization}

Now suppose that we have in this way found a pair $(C, H)$ as above, for which
$C$ has minimal discriminant. We can then further optimize this pair by
applying the following two steps:
\begin{enumerate}
  \item Minimize the defining equation of $C$ by using the theory of quaternion
    algebras (implemented in the \Magma{} function \texttt{MinimalModel});
  \item Apply the reduction theory of point clusters~\cite{Stoll2011} applied
    to the intersection $C \cap H$ (implemented in the \Magma{} function
    \texttt{Reduce\-Cluster}).
\end{enumerate}

The second step above is more or less optional; typically it leads to a rather
better $H$ at the cost of a slightly worse $C$. Regardless, at the end of this
procedure, we can construct a binary form $b_8$ over a quadratic extension $K$
of $\Q$ by parametrizing the conic $C$, and we then reconstruct $b_4$ and $b_0$
as in~\cite{LRS16}. The associated ternary quartic form $F$ is usually defined
over a quadratic extension of $\Q$. Since its covariant $\rho (F)$
from~\cite{LRS16} is a multiple of $y^2 - x z$, we can immediately apply the
construction from~\cite{VanRijnswou} to obtain an element $[ M ] \in \PGL_3
(K)$ that up to a scalar $\lambda$ transforms $F$ into its conjugate $\sigma
(F)$:
\begin{equation}\label{eq:2}
  [ \sigma (F) ] = [ F . M ].
\end{equation}
In the cases under consideration we know that the curve defined by $F$ descends
because of the triviality of its automorphism group. This implies that the
cocycle defined by the class $[ M ]$ lifts to $\GL_3 (K)$. Explicitly, let $M
\in \GL_3 (K)$ be some representative of the class $[ M ]$. Then we have
\begin{equation}\label{eq:3}
  M \sigma (M) = \pi
\end{equation}
for some scalar matrix $\pi$. Conjugating this equality shows that in fact $\pi
\in \Q$, and taking determinants yields $\delta \sigma (\delta) = \pi^3$, where
$\delta$ is the determinant of~$M$. Now let $M_0 = (\pi / \delta) M$. Then we
have
\begin{equation}
  M_0 \sigma (M_0)
  = \frac{\pi}{\delta} M \frac{\sigma (\pi)}{\sigma (\delta)} \sigma (M)
  = \frac{\pi \sigma (\pi) \pi}{\delta \sigma (\delta)}
  = \frac{\pi^3}{\delta \sigma (\delta)} = 1 .
\end{equation}

We may therefore assume that $M \in GL_3 (K)$ corresponds to a lifted cocycle.
The Galois cohomology group $H^1 (\Gal (K \ext \Q), \GL_3 (K))$ is trivial;
Hilbert's Theorem 90 can be used to construct a coboundary $N$ for $M$, that
is, a matrix in $\GL_3 (K)$ for which
\begin{equation}
  M \sigma (N) = N .
\end{equation}
After choosing a random matrix $R \in \GL_3 (K)$, one can in fact take
\begin{equation}\label{eq:1}
  N = R + M \sigma (R) .
\end{equation}

We thus obtain a coboundary $N$ corresponding to the cocycle $M$. If we put
$F_0 = F . N$, then the class $[ F_0 ]$ is defined over $\Q$. The transformed
form $F_0$ itself still need not be defined over $\Q$, but this can be achieved
by dividing it by one of its coefficients.

A complication is that the determinant of a random matrix $N$ as
in~\eqref{eq:1} typically has a rather daunting factorization. These factors
can (and usually will) later show up as places of bad reduction of the
descended form $F_0$. It is therefore imperative to avoid a bad factorization
structure of the determinant of $N$. This, however, can be ensured by
performing a lazy factorization of this determinant and passing to a next
random choice if the result is not satisfactory.

After we have obtained a form $F_0$, one can apply the \Magma{} function
\texttt{Minimize\-Reduce\-Plane\-Quartic}; this function combines a
discriminant minimization step due to Elsenhans in~\cite{elsenhans-good} with
the reduction theory of Stoll in~\cite{Stoll2011}. Typically the first of these
steps leads to the most significant reduction of the coefficient size, since it
applies a suitable transformation in $\GL_3 (\Q)$ whose determinant is a large
prime, whereas the cluster reduction step is a further optimization involving
only the subgroup $\SL_3 (\Z)$. As mentioned above, we can save some time in
the minimization step by carrying over the primes in the factorization of the
determinant of the coboundary $N$, since these will recur in the set of bad
primes of $F_0$.\bigskip

All in all, we get the following randomized algorithm whose heuristic
complexity is polynomial in the size of the Dixmier--Ohno invariants, if we
assume that the factorizations of $I_{12}^{\min}$ and $I_{27}^{\min}$ are
known, and that $\det N$ behaves as a random integer.

\begin{thealgo}
  [Integral plane quartic reconstruction from its Diximier-Ohno invariants
  $\DO^{\min}$ when the factorizations of $I_{12}^{\min}$ and $I_{27}^{\min}$ are known]
  \label{alg:Qreconstruct}\ \\
  \begin{enumerate}
  \item Repeat the following steps until $N\ne0$ and the full factorization of
    $\det(N)$ is known.
    \begin{enumerate}
    \item Calculate the Shioda invariants $\SH$ of $b_8$ \emph{(as
      explained in~\cite{LR11})}.
    \item Evaluate the conic $C_{q_{\lambda,\mu}}$ at $\SH$ and
      determine $(\lambda, \mu)$ by using the extended Euclidean algorithm
      \emph{(so that $\disc C_{q_{\lambda,\mu}} \simeq I_{12}^{\min}$, see
      Section~\ref{sec:choosing-right-conic})}.
    \item Choose a point $P$ on the conic $C_{q_{\lambda,\mu}}$ and use it to
      parametrize the conic.\\
      \emph{(To achieve this, let $P$ to be any rational point of
      $C_{q_{\lambda,\mu}}$ if it is easy to find. Otherwise intersect $C_q$ with a
      random rational line with a defining equation of small height and let $P$
      be the quadratic point defined by this intersection.)}
    \item Intersect $C_{q_{\lambda,\mu}}$ and $H_{q_{\lambda,\mu}}$ to obtain
      the octic $b_8$, then calculate the forms $b_4$ and $b_0$ and
      reconstruct a quartic $F$ via the map~$\ell^*$.
    \item If $F$ is defined over $\Q$ then set $N$ to be the identity matrix of
      $\GL(3,\Q)$, else let $N$ be a random coboundary as in~\eqref{eq:1}.
    \item Try to compute a factorization of $\det N$. If this fails within the
      allocated time, then start over.
    \end{enumerate}
  \item Let $F_0 = F.N$ and divide the result by one of its coefficients, so
    that $F_0$ has coefficients in~$\Q$.
  \item Reduce the coefficient size of $F_0$ \emph{(with
      \texttt{\textup{MinimizeReducePlaneQuar\-tic}}, using the prime factors of
      $\det N$ and $I_{12}^{\min}$).}
  \end{enumerate}
\end{thealgo}

One important practical speedup for Algorithm~\ref{alg:Qreconstruct} exploits
that if we take $R$ in \eqref{eq:1} to be integral, then the determinants of
the random coboundaries that we compute in Step $(i)(e)$ share the same
denominator, namely that of $\pi/\delta$, where $\pi$ is as in~\eqref{eq:3} and
where $\delta$ is the determinant of $M$. In turn, the quantity $\pi/\delta$
only depends on the choice of the random line in Step $(i)(c)$. A
straightforward optimization is thus to loop over the Steps $(i)(a)-(d)$ until
a lazy factorization of the denominator of $\det(1 + M)$ yields its full
factorization (note that here $M$ is the cocycle defined by
equation~\eqref{eq:2} and $1 + M = R + M\sigma(R)$ for $R$ the identity
matrix). Once done, we can loop over the Steps $(i)(e)-(i)(f)$ to test as many
coboundaries $N = R + M\sigma(R)$ from random integral matrices $R$ as needed,
once more until the lazy factorization of the denominator of $\det N$ is its
full factorization.

In the most difficult case, \ie, case 16 (\textit{cf.}
Table~\ref{table: CMfields}), the candidates for $\det N$ have approximately
500-digit denominators and 700-digit numerators. If we allow less than a second
for the lazy factorization routine in \Magma{}, then the total computation in
the end takes less than 5 minutes on a laptop. In this case, the descended form
$F_0 = F.N$ has 1500-digit coefficients! Once the discriminant minimization
steps from~\cite{elsenhans-good} are done for each prime divisor of $\det N$,
we are left with a form that ``merely'' has 50-digit coefficients. Stoll's
reduction method~\cite{Stoll2011} then finally yields the 15-digit equation
given in Section~\ref{sec:results}.

\begin{remark}
  Bouyer and Streng~\cite[Algorithm~4.8]{BouyerStreng} show how one can avoid
  factoring in the discriminant minimization of binary forms. Such a trick
  enabled them to eliminate the need for a loop like that in Step (i) of
  Algorithm~\ref{alg:Qreconstruct} when considering curves of genus $2$. It
  remains to be seen whether a similar trick applies to Elsenhans's
  discriminant minimization of plane quartics~\cite{elsenhans-good}. If it
  does, then that would greatly speed up the reconstruction.
\end{remark}

\section{Remarks on the results}
\label{sec:remarks}

Our (heuristic) results can be found in the next section; here we discuss some
of their properties and perform a few sanity checks. The very particular
pattern of the factorization of the discriminants is already a good indicator
of the correctness of our computations. Note that as the Dixmier--Ohno
invariants that we use involve denominators with prime factors in $\left\{
2,3,5,7 \right\}$, we will not look at the valuation of our invariants at these
primes.

As was mentioned in the introduction, one of the motivations for computing this
list of curves was to have examples in hand to understand the possible
generalization of the results of Goren--Lauter~\cite{GorenLauter} in genus $2$
to non-hyperelliptic curves of genus~$3$. In genus~$2$, all primes dividing the
discriminant are primes of bad reduction for the curve. This bad reduction
provides information on the structure of the endomorphism ring of the reduction
of the Jacobian. This particular structure allows one, with additional work, to
bound the primes dividing the discriminant. Taking this even further allowed
Lauter--Viray~\cite{viray} to find out exactly which prime powers divide the
discriminant. Similar bounds on primes dividing invariants have been obtained
by K{\i{}}l{\i{}}{\c{c}}er--Lauter--Lorenzo--Newton--Ozman--Streng for
hyperelliptic~\cite{KLLNOS} and Picard~\cite{KLLNOS, KLS} curves.

Beyond these cases, so for ``generic'' genus $3$ CM curves $X$, the situation is
more involved. Let us fix terminology for a prime by calling it
\begin{enumerate}
  \item a \emph{potentially plane prime} if, after extending the base field,
    $X$ has good non-hyperelliptic reduction at this prime;
  \item a \emph{potentially hyperelliptic prime} if, after extending the base
    field, $X$ has good hyperelliptic reduction at this prime;
  \item a \emph{geometrically bad prime} in the remaining cases.
\end{enumerate}
The first case can be detected easily, but distinguishing the second from the
third case from the knowledge of the Dixmier-Ohno invariants is a difficult
task and will be the main result of \cite{LLR17}. Applying these forthcoming
results to the list of curves of Section 5, it can be proved for those curves that all primes $p>7$
dividing $I_{27}^{\min}$ with exponents $7$ and $14$ are potentially hyperelliptic whereas
the few primes that are not of this kind are geometrically bad. Primes $p>7$
dividing $\disc X\,/\,I_{27}^{\min}$ for the curves of Section 5 are all potentially plane primes.

This profusion of hyperelliptic primes is typical of the CM case. Since the
curves that we consider are CM curves, their Jacobian has potentially good
reduction at all primes. Therefore, a prime is bad for $X$ if and only if the
Jacobian of $X$ reduces to a product of two abelian sub-varieties with a
decomposable principal polarization. The locus of such abelian threefolds is of
codimension $2$ in the moduli space of principally polarized abelian
threefolds, whereas the locus of Jacobians of hyperelliptic curves has
codimension $1$. We therefore expect that ``most'' of the non-potentially-plane primes
dividing the discriminant of a CM plane quartic are potentially hyperelliptic primes.

It should be mentioned that the results of \cite{LLR17} do not provide a closed
formula for the potentially hyperelliptic primes simply in terms of the CM-type and
polarization. In fact we wish to conclude this section with two remarks on the
primes dividing $I_{27}^{\min}$ that suggest that new phenomena occur for
potentially hyperelliptic primes of plane quartics that do not have an exact equivalent in
lower genus and that will require new theoretical developments in order to be
fully explained. First, unlike the factorization pattern of the discriminants
in the genus-$2$ CM, hyperelliptic and Picard cases, the factorization pattern
of the product $b$ of the potentially hyperelliptic primes seems to fit with that of a
random integer of size~$b$. For example, in case 16 below we have $b = 19 \cdot
37\cdot 79\cdot 13373064392147$. Secondly, the following proposition (applied
for instance to $X_9$ at the primes $233$ and $857$ which are both totally
split) shows that the reduction of the Jacobian of $X$ at a potentially hyperelliptic prime
can still be absolutely simple.

\begin{proposition}\label{prop:reduction}
  Let $A$ be an abelian variety over a number field $k$ and suppose that $A$
  has CM by $\mathcal{O}_K$ for a sextic cyclic CM field~$K$. Let
  $\mathfrak{p}\subset\mathcal{O}_k$ be a prime lying over a rational
  prime~$p$. Let $n$ be the number of prime factors of $p\mathcal{O}_K$.

  Then possibly after extending $k$, the following holds for the reduction
  $\overline{A}$ of $A$ modulo~$\mathfrak{p}$.

  We have $\overline{A}\sim B^d$ where $B$ is absolutely simple and
  \begin{enumerate}
    \item If $n=2$, then $d=1$, $\overline{A}=B$ is absolutely simple, and
      $\mathrm{End}(\overline{A}_{\Fpbar})\otimes\Q$ is a central simple
      division algebra of reduced degree $3$ over the imaginary quadratic
      subfield $K_1$ of $K$. It is ramified exactly at the two primes over $p$
      of $K_1$.
    \item If $n=6$, then $d=1$, $\overline{A}=B$ is absolutely simple, and
      $\mathrm{End}(\overline{A}_{\Fpbar})\otimes\Q\cong K$.
    \item In all other cases, we have $d=3$, $\overline{A}$ is supersingular,
      $\mathrm{End}(B_{\Fpbar})\otimes\Q$ is the quaternion algebra
      $B_{p,\infty}$ over $\Q$ ramified only at $p$ and infinity, and
      $\mathrm{End}(\overline{A}_{\Fpbar})\otimes\Q$ is the $3\times 3$ matrix
      algebra over $B_{p,\infty}$.
  \end{enumerate}
  If $A$ is the Jacobian of a curve, then in cases (i) and (ii) the curve has
  potentially good reduction. (In case (iii) both good and bad reduction can
  occur.)
\end{proposition}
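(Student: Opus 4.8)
The plan is to combine Honda--Tate theory for abelian varieties over finite fields with the Shimura--Taniyama description of the Newton polygon, using the cyclic structure of $\Gal(K/\Q)$ to read off the reduction type from $n$. Throughout I assume the CM type $\Phi$ of $A$ is primitive, as holds in all cases of interest by Theorem~\ref{thm:list_of_fields}; this is also what makes $A$ absolutely simple. Since $A$ has complex multiplication it has potentially good reduction, so after enlarging $k$ one may assume that $A$ has good reduction at $\mathfrak{p}$, that every endomorphism of $\overline A_{\Fpbar}$ is already defined over $\F_q := \mathcal{O}_k/\mathfrak{p}$, and that $\overline A \sim B^d$ with $B/\F_q$ simple. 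The CM structure reduces to $\mathcal{O}_K \hookrightarrow \End(\overline A)$, so $K$ is a maximal commutative subfield of $\End^0(\overline A) := \End(\overline A)\otimes\Q$; the $q$-power Frobenius $\pi$ is central, hence lies in $K$ by maximality, and $\Q(\pi)$ is the centre $F_1$ of $\End^0(\overline A) = M_d(\End^0(B))$. As $K/\Q$ is cyclic of degree $6$, $F_1$ is one of $\Q$, the imaginary quadratic subfield $K_1$, the real cubic subfield, or $K$; the real cubic is excluded because then $\pi$ would be totally real with $\pi^2 = \pi\overline\pi = q$, forcing $\pi \in \Q$.

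Next I would match the remaining possibilities for $F_1$ to the value of $n$. Since $\Gal(K/\Q)\cong\Z/6$ is abelian, the decomposition group $D$ of a prime over $p$ is the same for all such primes and has order $6/n$; complex conjugation is the unique element of order $2$, so $D$ contains it precisely when $n\in\{1,3\}$, and then fixes every prime above $p$. For $n=6$ one has $D=1$ and $p$ splits completely; for $n=2$ one has $D=\Gal(K/K_1)$, the order-$3$ subgroup, which omits complex conjugation, so $p$ splits in $K_1$ as $\mathfrak{q}_1\mathfrak{q}_2$ with the two primes swapped by conjugation, each staying prime in $K$. The Shimura--Taniyama formula then gives the slope of $\overline A$ along a prime $\mathfrak{P}$ of $K$ as $|\Phi_{\mathfrak{P}}|/[K_{\mathfrak{P}}:\Q_p]$, where $\Phi_{\mathfrak{P}}$ is the $\mathfrak{P}$-part of $\Phi$ (for a fixed embedding $\Qbar\hookrightarrow\Qbar_p$) and $\Phi_{\mathfrak{P}}\sqcup c\Phi_{\mathfrak{P}}$ is the full set of local embeddings; when $\overline{\mathfrak{P}}=\mathfrak{P}$, complex conjugation $c$ acts freely on these embeddings, so the slope is $1/2$.

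Putting the two observations together: for $n\in\{1,3\}$ every slope is $1/2$, so $\overline A$ is supersingular and over $\Fpbar$ isogenous to $B^3$ for a supersingular elliptic curve $B$, with $\End^0(B_{\Fpbar})=B_{p,\infty}$ and $\End^0(\overline A_{\Fpbar})=M_3(B_{p,\infty})$ — case (iii). For $n=6$ all local degrees are $1$, so all slopes are $0$ or $1$: $\overline A$ is ordinary, primitivity of $\Phi$ forces $\Q(\pi)=K$, hence $d=1$, $\overline A$ is absolutely simple and $\End^0(\overline A)=K$ — case (ii). For $n=2$, at $\mathfrak{q}_1,\mathfrak{q}_2$ the local degrees are $3$, and writing $a:=|\Phi_{\mathfrak{q}_1}|$, $|\Phi_{\mathfrak{q}_2}|=3-a$, one checks that $a\in\{0,3\}$ exactly when $\Phi$ is induced from $K_1$, so primitivity gives $a\in\{1,2\}$ and Newton polygon $(1/3)^3(2/3)^3$. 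Since neither $(1/3,2/3)$ nor $(1/3,1/3,2/3,2/3)$ is the Newton polygon of an abelian variety of dimension $1$ or $2$, $\overline A$ has no proper nonzero abelian subvariety, hence is absolutely simple with $d=1$; and by Tate's formula the local invariants of $\End^0(\overline A)$ are $\pm a/3$ at $\mathfrak{q}_1,\mathfrak{q}_2$ and $0$ elsewhere, so it is a central division algebra of reduced degree $3$ over $K_1$ ramified exactly at the two primes over $p$ — case (i).

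For the curve statement, when $A=\Jac(X)$ I would work over a field where $A$ has good reduction: there the stable model of $X$ has special fibre a stable curve of compact type whose Jacobian, equipped with its product principal polarization, recovers $\overline A$ together with its principal polarization, and this principally polarized abelian variety is decomposable unless the stable curve is irreducible, i.e. a smooth genus-$3$ curve. In cases (i) and (ii) $\overline A$ is absolutely simple, hence indecomposable, so the stable reduction is smooth and $X$ has potentially good reduction; in case (iii) the supersingular $\overline A\sim B^3$ admits both decomposable and indecomposable principal polarizations (supersingular genus-$3$ curves exist), so both good and bad reduction can occur. The one genuinely delicate point is case (i) of the Newton-polygon computation: getting the Shimura--Taniyama normalisation right, translating primitivity of $\Phi$ into ``$a\in\{1,2\}$'', and reading the ramification of $\End^0(\overline A)$ off Tate's invariant formula. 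Everything else is bookkeeping with the subgroup lattice of $\Z/6$ together with the stable reduction theorem and the potential good reduction of CM abelian varieties.
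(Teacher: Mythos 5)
Your proposal takes the same route as the paper: potential good reduction via Serre--Tate, the Shimura--Taniyama formula for the Frobenius $\pi$, and Honda--Tate (Tate's theorem) for the endomorphism algebra, organized by a case analysis on the splitting behaviour of $p$ in the cyclic extension $K/\Q$. The paper's own proof is extremely terse --- it simply records ``we did the computation for all possible splitting types of a prime'' and then notes stability under taking powers of $\pi$ --- so your write-up is essentially that promised computation, phrased through Newton polygons and the subgroup lattice of $\Z/6\Z$ rather than directly through the invariants of $\pi$; under Tate's dictionary these are the same thing. For the curve statement, the paper cites \cite[Corollary~4.3]{BCLLMNO} to conclude that bad reduction forces $\overline A$ to be non-simple, whereas you rederive this from the stable reduction theorem and the indecomposability of the reduced polarized Jacobian in cases (i) and (ii); again this is the same content, just unpacked.

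One point worth highlighting: you are right that primitivity of the CM type (equivalently, absolute simplicity of $A$) must be assumed, and the proposition as printed does not say this. Without it the conclusion is false --- if the CM type $\Phi$ is induced from the imaginary quadratic subfield $K_1$, then $A\sim E^3$ with $E$ an elliptic curve with CM by $\mathcal{O}_{K_1}$, and in case (i) (where $p$ splits in $K_1$) the reduction is ordinary with $\End(\overline A_{\Fpbar})\otimes\Q\cong M_3(K_1)$ rather than the claimed division algebra. The hypothesis holds in every application the paper makes (Jacobians of smooth plane quartics are simple), but your making it explicit is a genuine improvement over the paper's phrasing rather than a gap in your argument.
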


\begin{proof}
  By a theorem of Serre and Tate~\cite{SerreTate}, the abelian variety $A$ has
  potentially good reduction. Extend $k$ so that it has good reduction and so
  that $k$ contains the reflex field. The Shimura--Taniyama
  formula~\cite[Theorem~1(ii) in Section~13.1]{shimura61:_compl_abelian} then
  gives a formula for the Frobenius endomorphism of the reduction as an element
  $\pi \in \mathcal{O}_K$ up to units. A theorem in Honda--Tate
  theory~\cite[Th\'eor\`eme~1]{tate2} then gives a formula for the endomorphism
  algebra in terms of this~$\pi$. We did the computation for all possible
  splitting types of a prime in a cyclic sextic number field and found the
  above-mentioned endomorphism algebras over some finite extension of~$\F_p$.
  Moreover, we found that the endomorphism algebra from \loccit{}~in our cases
  does not change when taking powers of $\pi$ (\ie{}, extending $k$ and the
  extension of $\F_p$ further), so that these are indeed the endomorphism
  algebras over~$\Fpbar$.

  Finally, suppose further that $A=J(X)$ and $X$ does not have potentially good
  reduction. Then by~\cite[Corollary~4.3]{BCLLMNO}, we get that the reduction
  of $A$ is not absolutely simple, which gives a contradiction in cases (i)
  and~(ii).
\end{proof}

\section{Defining equations}
\label{sec:results}

We now give the equations of the plane quartics that we obtained for the CM
field listed in Table~\ref{table: CMfields}. The expressions of the invariants
of the curves obtained are too unwieldy to be written down completely; in fact
in some cases it is even difficult to factor all of them. Here we only show
the factorizations of $I_{27}^{\min}$ (the full list is available
at~\cite{fullistofcurves}).\medskip

\begin{dgroup*}[style={\footnotesize},spread={-2pt}]
  \begin{dmath*}
    {\Xb_{1}}:    -4169\,{{x}}^{4}-956\,{{x}}^{3}{y}+7440\,{{x}}^{3}{z}+55770\,{{x}}^{2}{{y}}^{2}+43486\,{{x}}^{2}{y}\,{z}+42796\,{{x}}^{2}{{z}}^{2}-38748\,{x}\,{{y}}^{3}-30668\,{x}\,{{y}}^{2}{z}+79352\,{x}\,{y}\,{{z}}^{2}-162240\,{x}\,{{z}}^{3}+6095\,{{y}}^{4}+19886\,{{y}}^{3}{z}-89869\,{{y}}^{2}{{z}}^{2}-1079572\,{y}\,{{z}}^{3}-6084\,{{z}}^{4}{=0}
  \end{dmath*}
  \begin{dsuspend}
    with {\footnotesize $\disc X_{1} = %
      2^{-27} \cdot 3^{-27} \cdot 13^{18} \cdot I_{27}^{\min}$}
    where
    {\footnotesize
      \begin{math}
        I_{27}^{\min} = %
        -2^{57}\cdot3^{27}\cdot5^{12}\cdot7^{9}\cdot37^{14}\cdot15187^{14}\,.
      \end{math}
    }
  \end{dsuspend}
  \begin{dsuspend}
  \end{dsuspend}
  \begin{dmath*}
    {\Xb_{2}}:    19\,{{x}}^{4}+80\,{{x}}^{3}{y}-54\,{{x}}^{3}{z}-24\,{{x}}^{2}{{y}}^{2}-34\,{{x}}^{2}{y}\,{z}+77\,{{x}}^{2}{{z}}^{2}-88\,{x}\,{{y}}^{3}-28\,{x}\,{{y}}^{2}{z}+38\,{x}\,{y}\,{{z}}^{2}+516\,{x}\,{{z}}^{3}+30\,{{y}}^{4}-36\,{{y}}^{3}{z}-135\,{{y}}^{2}{{z}}^{2}+452\,{y}\,{{z}}^{3}+4\,{{z}}^{4}{=0}
  \end{dmath*}
  \begin{dsuspend}
    with {\footnotesize $\disc X_{2} = %
      2^{-27} \cdot 3^{-27} \cdot I_{27}^{\min}$}
    where
    {\footnotesize
      \begin{math}
        I_{27}^{\min} = %
        2^{29}\cdot3^{35}\cdot7^{9}\cdot701^{14}\,.
      \end{math}
    }
  \end{dsuspend}
  \begin{dsuspend}
  \end{dsuspend}
  \begin{dmath*}
    {\Xb_{3}}:    -1210961\,{{x}}^{4}+5202144\,{{x}}^{3}{y}+408700\,{{x}}^{3}{z}-2479108\,{{x}}^{2}{{y}}^{2}+1908050\,{{x}}^{2}{y}\,{z}+8367272\,{{x}}^{2}{{z}}^{2}-4393072\,{x}\,{{y}}^{3}-6944000\,{x}\,{{y}}^{2}{z}+6772756\,{x}\,{y}\,{{z}}^{2}+10594064\,{x}\,{{z}}^{3}+4978166\,{{y}}^{4}-8342100\,{{y}}^{3}{z}+4611839\,{{y}}^{2}{{z}}^{2}+14080572\,{y}\,{{z}}^{3}-1387684\,{{z}}^{4}{=0}
  \end{dmath*}
  \begin{dsuspend}
    with {\footnotesize $\disc X_{3} = %
      -2^{-27} \cdot 3^{-18} \cdot 31^{18} \cdot I_{27}^{\min}$}
    where
    {\footnotesize
      \begin{math}
        I_{27}^{\min} = %
        2^{29}\cdot3^{36}\cdot5^{36}\cdot7^{7}\cdot233^{14}\cdot356399^{14}\,.
      \end{math}
    }
  \end{dsuspend}
  \begin{dsuspend}
  \end{dsuspend}
  \begin{dmath*}
    {\Xb_{5}}:    115\,{{x}}^{4}-766\,{{x}}^{3}{y}-1336\,{{x}}^{3}{z}+1205\,{{x}}^{2}{{y}}^{2}+5178\,{{x}}^{2}{y}\,{z}+4040\,{{x}}^{2}{{z}}^{2}+8216\,{x}\,{{y}}^{3}+1322\,{x}\,{{y}}^{2}{z}-9484\,{x}\,{y}\,{{z}}^{2}+1144\,{x}\,{{z}}^{3}-8094\,{{y}}^{4}+9032\,{{y}}^{3}{z}+9669\,{{y}}^{2}{{z}}^{2}-6292\,{y}\,{{z}}^{3}-4706\,{{z}}^{4}{=0}
  \end{dmath*}
  \begin{dsuspend}
    with {\footnotesize $\disc X_{5} = %
      2^{-27} \cdot 3^{-27} \cdot 13^{18} \cdot I_{27}^{\min}$}
    where
    {\footnotesize
      \begin{math}
        I_{27}^{\min} = %
        2^{29}\cdot3^{51}\cdot7^{7}\cdot37^{14}\cdot127^{14}\,.
      \end{math}
    }
  \end{dsuspend}
  \begin{dsuspend}
  \end{dsuspend}
  \begin{dmath*}
    {\Xb_{6}}:    1444\,{{x}}^{4}-3134924\,{{x}}^{3}{y}+5002016\,{{x}}^{3}{z}+2321857\,{{x}}^{2}{{y}}^{2}+2257732\,{{x}}^{2}{y}\,{z}+1585968\,{{x}}^{2}{{z}}^{2}-3166884\,{x}\,{{y}}^{3}+6283512\,{x}\,{{y}}^{2}{z}+1014570\,{x}\,{y}\,{{z}}^{2}-4791852\,{x}\,{{z}}^{3}+3312514\,{{y}}^{4}-7211392\,{{y}}^{3}{z}+19540084\,{{y}}^{2}{{z}}^{2}-10746888\,{y}\,{{z}}^{3}+4167513\,{{z}}^{4}{=0}
  \end{dmath*}
  \begin{dsuspend}
    with {\footnotesize $\disc X_{6} = %
      2^{-27} \cdot 3^{-27} \cdot 19^{18} \cdot I_{27}^{\min}$}
    where
    {\footnotesize
      \begin{math}
        I_{27}^{\min} = %
        2^{29}\cdot3^{51}\cdot7^{7}\cdot17^{12}\cdot127^{14}\cdot211^{14}\cdot20707^{14}\,.
      \end{math}
    }
  \end{dsuspend}
  \begin{dsuspend}
  \end{dsuspend}
  \begin{dmath*}
    {\Xb_{7}}:    -133225\,{{x}}^{4}-68935944\,{{x}}^{3}{y}+92175713\,{{x}}^{3}{z}-21721369\,{{x}}^{2}{{y}}^{2}+2990226\,{{x}}^{2}{y}\,{z}+86699691\,{{x}}^{2}{{z}}^{2}+18547032\,{x}\,{{y}}^{3}+37568944\,{x}\,{{y}}^{2}{z}+108649086\,{x}\,{y}\,{{z}}^{2}-259362054\,{x}\,{{z}}^{3}+35272208\,{{y}}^{4}+266781024\,{{y}}^{3}{z}+140110856\,{{y}}^{2}{{z}}^{2}-1192622568\,{y}\,{{z}}^{3}+173418831\,{{z}}^{4}{=0}
  \end{dmath*}
  \begin{dsuspend}
    with {\footnotesize $\disc X_{7} = %
      2^{-27} \cdot 3^{-18} \cdot 7^{18} \cdot 73^{18} \cdot I_{27}^{\min}$}
    where
    {\footnotesize
      \begin{math}
        I_{27}^{\min} = %
        -2^{29}\cdot3^{36}\cdot5^{9}\cdot7^{7}\cdot71^{14}\cdot83^{12}\cdot17665559^{14}\,.
      \end{math}
    }
  \end{dsuspend}
  \begin{dsuspend}
  \end{dsuspend}
  \begin{dmath*}
    {\Xb_{8}}:    11\,{{x}}^{4}-8\,{{x}}^{3}{y}-46\,{{x}}^{3}{z}+216\,{{x}}^{2}{{y}}^{2}+306\,{{x}}^{2}{y}\,{z}+1636\,{{x}}^{2}{{z}}^{2}-144\,{x}\,{{y}}^{3}+304\,{x}\,{{y}}^{2}{z}+15726\,{x}\,{y}\,{{z}}^{2}+7963\,{x}\,{{z}}^{3}-428\,{{y}}^{4}+6840\,{{y}}^{3}{z}-32779\,{{y}}^{2}{{z}}^{2}-16901\,{y}\,{{z}}^{3}+106789\,{{z}}^{4}{=0}
  \end{dmath*}
  \begin{dsuspend}
    with {\footnotesize $\disc X_{8} = %
      2^{-27} \cdot 3^{-27} \cdot 7^{9} \cdot 19^{18} \cdot I_{27}^{\min}$}
    where
    {\footnotesize
      \begin{math}
        I_{27}^{\min} = %
        2^{43}\cdot3^{27}\cdot7^{15}\cdot499^{14}\,.
      \end{math}
    }
  \end{dsuspend}
  \begin{dsuspend}
  \end{dsuspend}
  \begin{dmath*}
    {\Xb_{9}}:    96128\,{{x}}^{4}+232804\,{{x}}^{3}{y}+5588\,{{x}}^{3}{z}+51333\,{{x}}^{2}{{y}}^{2}-37020\,{{x}}^{2}{y}\,{z}-5791396\,{{x}}^{2}{{z}}^{2}-108416\,{x}\,{{y}}^{3}-49056\,{x}\,{{y}}^{2}{z}-6947226\,{x}\,{y}\,{{z}}^{2}-214292\,{x}\,{{z}}^{3}-5880\,{{y}}^{4}-581812\,{{y}}^{3}{z}+2438436\,{{y}}^{2}{{z}}^{2}+1944852\,{y}\,{{z}}^{3}+87102093\,{{z}}^{4}{=0}
  \end{dmath*}
  \begin{dsuspend}
    with {\footnotesize $\disc X_{9} = %
      2^{-27} \cdot 3^{-18} \cdot 13^{18} \cdot I_{27}^{\min}$}
    where
    {\footnotesize
      \begin{math}
        I_{27}^{\min} = %
        -2^{42}\cdot3^{18}\cdot5^{12}\cdot7^{14}\cdot79^{14}\cdot233^{14}\cdot857^{14}\,.
      \end{math}
    }
  \end{dsuspend}
  \begin{dsuspend}
  \end{dsuspend}
  \begin{dmath*}
    {\Xb_{10}}:    348\,{{x}}^{4}-832\,{{x}}^{3}{y}-4\,{{x}}^{3}{z}+261\,{{x}}^{2}{{y}}^{2}-132\,{{x}}^{2}{y}\,{z}-1680\,{{x}}^{2}{{z}}^{2}+224\,{x}\,{{y}}^{3}-168\,{x}\,{{y}}^{2}{z}+1986\,{x}\,{y}\,{{z}}^{2}+36\,{x}\,{{z}}^{3}+8\,{{y}}^{4}-236\,{{y}}^{3}{z}+404\,{{y}}^{2}{{z}}^{2}+428\,{y}\,{{z}}^{3}+1989\,{{z}}^{4}{=0}
  \end{dmath*}
  \begin{dsuspend}
    with {\footnotesize $\disc X_{10} = %
      2^{-27} \cdot 3^{-18} \cdot I_{27}^{\min}$}
    where
    {\footnotesize
      \begin{math}
        I_{27}^{\min} =  %
        -2^{42}\cdot3^{18}\cdot7^{14}\cdot41^{14}\cdot71^{14}\,.
      \end{math}
    }
  \end{dsuspend}
  \begin{dsuspend}
  \end{dsuspend}
  \begin{dmath*}
    {\Xb_{11}}:    245137\,{{x}}^{4}+3134444\,{{x}}^{3}{y}-405198\,{{x}}^{3}{z}+13885332\,{{x}}^{2}{{y}}^{2}-4713906\,{{x}}^{2}{y}\,{z}-6576142\,{{x}}^{2}{{z}}^{2}+25220768\,{x}\,{{y}}^{3}-13466052\,{x}\,{{y}}^{2}{z}-40450004\,{x}\,{y}\,{{z}}^{2}+6168379\,{x}\,{{z}}^{3}+16002624\,{{y}}^{4}-12848080\,{{y}}^{3}{z}-51202207\,{{y}}^{2}{{z}}^{2}+21339374\,{y}\,{{z}}^{3}+44888767\,{{z}}^{4}{=0}
  \end{dmath*}
  \begin{dsuspend}
    with {\footnotesize $\disc X_{11} = %
      2^{-27} \cdot 3^{-18} \cdot 31^{18} \cdot I_{27}^{\min}$}
    where
    {\footnotesize
      \begin{math}
        I_{27}^{\min} = %
        -2^{72}\cdot3^{18}\cdot7^{14}\cdot23^{14}\cdot47^{14}\cdot27527^{14}\,.
      \end{math}
    }
  \end{dsuspend}
  \begin{dsuspend}
  \end{dsuspend}
  \begin{dmath*}
    {\Xb_{12}}:    -2283766\,{{x}}^{4}-40282205\,{{x}}^{3}{y}+65256060\,{{x}}^{3}{z}+86351004\,{{x}}^{2}{{y}}^{2}-44980176\,{{x}}^{2}{y}\,{z}-98227040\,{{x}}^{2}{{z}}^{2}+34948793\,{x}\,{{y}}^{3}+112406040\,{x}\,{{y}}^{2}{z}-10691928\,{x}\,{y}\,{{z}}^{2}-811765633\,{x}\,{{z}}^{3}-46977843\,{{y}}^{4}+27242836\,{{y}}^{3}{z}+210065028\,{{y}}^{2}{{z}}^{2}-159829005\,{y}\,{{z}}^{3}-57425706\,{{z}}^{4}{=0}
  \end{dmath*}
  \begin{dsuspend}
    with {\footnotesize $\disc X_{12} = %
      -2^{-45} \cdot 3^{-18} \cdot I_{27}^{\min}$}
    where
    {\footnotesize
      \begin{math}
        I_{27}^{\min} = %
        2^{5}\cdot3^{18}\cdot7^{14}\cdot11^{9}\cdot5711^{14}\cdot73064203493^{14}\,.
      \end{math}
    }
  \end{dsuspend}
  \begin{dsuspend}
  \end{dsuspend}
  \begin{dmath*}
    {\Xb_{13}}:    13741849\,{{x}}^{4}-33952358\,{{x}}^{3}{y}-12314654\,{{x}}^{3}{z}-79058925\,{{x}}^{2}{{y}}^{2}+321820356\,{{x}}^{2}{y}\,{z}-449435767\,{{x}}^{2}{{z}}^{2}+24161786\,{x}\,{{y}}^{3}+58585032\,{x}\,{{y}}^{2}{z}+184173924\,{x}\,{y}\,{{z}}^{2}+202615424\,{x}\,{{z}}^{3}+10642401\,{{y}}^{4}+150598482\,{{y}}^{3}{z}+136602159\,{{y}}^{2}{{z}}^{2}-6607170137\,{y}\,{{z}}^{3}+3720024064\,{{z}}^{4}{=0}
  \end{dmath*}
  \begin{dsuspend}
    with {\footnotesize $\disc X_{13} = %
      2^{-36} \cdot 3^{-18} \cdot 11^{18} \cdot  43^{18} \cdot I_{27}^{\min}$}
    where
    {\footnotesize
      \begin{math}
        I_{27}^{\min} = %
        2^{20}\cdot3^{18}\cdot11^{9}\cdot547^{14}\cdot11827^{14}\cdot189169^{14}\,.
      \end{math}
    }
  \end{dsuspend}
  \begin{dsuspend}
  \end{dsuspend}
  \begin{dmath*}
    {\Xb_{14}}:    727950\,{{x}}^{4}-1982567\,{{x}}^{3}{y}-1449460\,{{x}}^{3}{z}+2619975\,{{x}}^{2}{{y}}^{2}-7272852\,{{x}}^{2}{y}\,{z}+12943560\,{{x}}^{2}{{z}}^{2}+1222070\,{x}\,{{y}}^{3}-9541020\,{x}\,{{y}}^{2}{z}-10154664\,{x}\,{y}\,{{z}}^{2}+31717821\,{x}\,{{z}}^{3}+3907465\,{{y}}^{4}+7463256\,{{y}}^{3}{z}+4691252\,{{y}}^{2}{{z}}^{2}+58884154\,{y}\,{{z}}^{3}+10671882\,{{z}}^{4}{=0}
  \end{dmath*}
  \begin{dsuspend}
    with {\footnotesize $\disc X_{14} = %
      2^{-45} \cdot 3^{-18} \cdot 19^{18} \cdot  I_{27}^{\min}$}
    where
    {\footnotesize
      \begin{math}
        I_{27}^{\min} = %
        2^{5}\cdot3^{18}\cdot11^{19}\cdot101^{14}\cdot107^{14}\cdot8378707^{14}\,.
      \end{math}
    }
  \end{dsuspend}
  \begin{dsuspend}
  \end{dsuspend}
  \begin{dmath*}
    {\Xb_{15}}:    {{x}}^{4}-{{x}}^{3}{y}+2\,{{x}}^{3}{z}+2\,{{x}}^{2}{y}\,{z}+2\,{{x}}^{2}{{z}}^{2}-2\,{x}\,{{y}}^{2}{z}+4\,{x}\,{y}\,{{z}}^{2}-{{y}}^{3}{z}+3\,{{y}}^{2}{{z}}^{2}+2\,{y}\,{{z}}^{3}+{{z}}^{4}{=0}
  \end{dmath*}
  \begin{dsuspend}
    with {\footnotesize $\disc X_{15} = %
      2^{-45} \cdot 3^{-27} \cdot I_{27}^{\min}$}
    where
    {\footnotesize
      \begin{math}
        I_{27}^{\min} = %
        2^{5}\cdot3^{27}\cdot19^{7}\,.
      \end{math}
    }
  \end{dsuspend}
  \begin{dsuspend}
  \end{dsuspend}
  \begin{dmath*}
    {\Xb_{16}}:    66648606\,{{x}}^{4}-10422787\,{{x}}^{3}{y}-1171743077\,{{x}}^{3}{z}+272093232\,{{x}}^{2}{{y}}^{2}+894539212\,{{x}}^{2}{y}\,{z}+1758438152\,{{x}}^{2}{{z}}^{2}-239684773\,{x}\,{{y}}^{3}-3355325973\,{x}\,{{y}}^{2}{z}+21854285561\,{x}\,{y}\,{{z}}^{2}+213880974126\,{x}\,{{z}}^{3}+731104019\,{{y}}^{4}-6282157788\,{{y}}^{3}{z}-38790710054\,{{y}}^{2}{{z}}^{2}+288506848419\,{y}\,{{z}}^{3}+1153356733618\,{{z}}^{4}{=0}
  \end{dmath*}
  \begin{dsuspend}
    with {\footnotesize $\disc X_{16} = %
      2^{-45} \cdot 3^{-27} \cdot 19^{18} \cdot I_{27}^{\min}$}
    where
    {\footnotesize
      \begin{math}
        I_{27}^{\min} = %
        2^{5}\cdot3^{35}\cdot19^{7}\cdot37^{14}\cdot79^{14}\cdot13373064392147^{14}\,.
      \end{math}
    }
  \end{dsuspend}
  \begin{dsuspend}
  \end{dsuspend}
  \begin{dmath*}
    {\Xb_{17}}:    3717829\,{{x}}^{4}-1434896\,{{x}}^{3}{y}+19525079\,{{x}}^{3}{z}-23623031\,{{x}}^{2}{{y}}^{2}+55253545\,{{x}}^{2}{y}\,{z}+168545160\,{{x}}^{2}{{z}}^{2}+36024736\,{x}\,{{y}}^{3}-64558785\,{x}\,{{y}}^{2}{z}+379342822\,{x}\,{y}\,{{z}}^{2}-329255097\,{x}\,{{z}}^{3}+42096963\,{{y}}^{4}+115245505\,{{y}}^{3}{z}-817353798\,{{y}}^{2}{{z}}^{2}+498157725\,{y}\,{{z}}^{3}-34967215\,{{z}}^{4}{=0}
  \end{dmath*}
  \begin{dsuspend}
    with {\footnotesize $\disc X_{17} = %
      2^{-36} \cdot 3^{-27} \cdot I_{27}^{\min}$}
    where
    {\footnotesize
      \begin{math}
        I_{27}^{\min} = %
        2^{20}\cdot3^{77}\cdot19^{7}\cdot1229^{14}\cdot3913841117^{14}\,.
      \end{math}
    }
  \end{dsuspend}
  \begin{dsuspend}
  \end{dsuspend}
  \begin{dmath*}
    {\Xb_{18}}:    3278898472\,{{x}}^{4}+35774613556\,{{x}}^{3}{y}-172165788624\,{{x}}^{3}{z}-42633841878\,{{x}}^{2}{{y}}^{2}+224611458828\,{{x}}^{2}{y}\,{z}+362086824567\,{{x}}^{2}{{z}}^{2}+6739276447\,{x}\,{{y}}^{3}+195387780024\,{x}\,{{y}}^{2}{z}+1153791743988\,{x}\,{y}\,{{z}}^{2}-3461357269578\,{x}\,{{z}}^{3}-18110161476\,{{y}}^{4}-549025255626\,{{y}}^{3}{z}-482663555556\,{{y}}^{2}{{z}}^{2}+15534718882176\,{y}\,{{z}}^{3}-61875497274721\,{{z}}^{4}{=0}
  \end{dmath*}
  \begin{dsuspend}
    with {\footnotesize $\disc X_{18} = %
      2^{-36} \cdot 13^{18} \cdot I_{27}^{\min}$}
    where
    {\footnotesize
      \begin{math}
        I_{27}^{\min} = %
        -2^{32}\cdot19^{7}\cdot101^{14}\cdot251^{14}\cdot7468843725186901^{14}\,.
      \end{math}
    }
  \end{dsuspend}
  \begin{dsuspend}
  \end{dsuspend}
  \begin{dmath*}
    {\Xb_{19}}:    -7\,{{x}}^{4}-2\,{{x}}^{3}{y}+16\,{{x}}^{3}{z}+7\,{{x}}^{2}{{y}}^{2}-6\,{{x}}^{2}{y}\,{z}-8\,{{x}}^{2}{{z}}^{2}+10\,{x}\,{{y}}^{3}+14\,{x}\,{{y}}^{2}{z}+2\,{x}\,{y}\,{{z}}^{2}-15\,{x}\,{{z}}^{3}+{{y}}^{4}+10\,{{y}}^{3}{z}+13\,{{y}}^{2}{{z}}^{2}+17\,{y}\,{{z}}^{3}+14\,{{z}}^{4}{=0}
  \end{dmath*}
  \begin{dsuspend}
    with {\footnotesize $\disc X_{19} = %
      2^{-36} \cdot 3^{-27} \cdot I_{27}^{\min}$}
    where
    {\footnotesize
      \begin{math}
        I_{27}^{\min} = %
        2^{20}\cdot3^{27}\cdot11^{14}\cdot43^{7}\,.
      \end{math}
    }
  \end{dsuspend}
  \begin{dsuspend}
  \end{dsuspend}
  \begin{dmath*}
    {\Xb_{20}}: 42978499\,{{x}}^{4}+91609890\,{{x}}^{3}{y}+226411413\,{{x}}^{3}{z}-152950386\,{{x}}^{2}{{y}}^{2}+225973290\,{{x}}^{2}{y}\,{z}+64073952\,{{x}}^{2}{{z}}^{2}+26287800\,{x}\,{{y}}^{3}+11918208\,{x}\,{{y}}^{2}{z}-742181730\,{x}\,{y}\,{{z}}^{2}-464894250\,{x}\,{{z}}^{3}-29463649\,{{y}}^{4}+198058830\,{{y}}^{3}{z}-144994689\,{{y}}^{2}{{z}}^{2}-208213515\,{y}\,{{z}}^{3}+85424183\,{{z}}^{4}{=0}
  \end{dmath*}
  \begin{dsuspend}
    with {\footnotesize $\disc X_{20} = %
      -2^{-45} \cdot I_{27}^{\min}$}
    where
    {\footnotesize
      \begin{math}
        I_{27}^{\min} = %
        2^{5}\cdot67^{7}\cdot1439^{14}\cdot2739021126001^{14}\,.
      \end{math}
    }
  \end{dsuspend}
\end{dgroup*}


\begin{thebibliography}{10}

\bibitem{harris}
E.~Arbarello, M.~Cornalba, P.~A. Griffiths, and J.~Harris.
\newblock {\em Geometry of algebraic curves. {V}ol. {I}}, volume 267 of {\em
  Grundlehren der Mathematischen Wissenschaften}.
\newblock Springer-Verlag, New York, 1985.

\bibitem{BILVcode}
J.~S. Balakrishnan, S.~Ionica, K.~Lauter, and C.~Vincent.
\newblock Genus 3.
\newblock Package available at
  \href{https://github.com/christellevincent/genus3}{\tt
  https://github.com/christellevincent/genus3}.

\bibitem{BILV}
J.~S. {Balakrishnan}, S.~{Ionica}, K.~{Lauter}, and C.~{Vincent}.
\newblock {Constructing genus 3 hyperelliptic Jacobians with CM}.
\newblock \href{http://arxiv.org/abs/1603.03832}{\tt arXiv:1603.03832}, 2016.

\bibitem{pari-gp}
C.~Batut, K.~Belabas, D.~Benardi, H.~Cohen, and M.~Olivier.
\newblock {\em User's Guide to {PARI-GP}}, 1998.
\newblock \href{https://pari.math.u-bordeaux.fr}{\tt
  https://pari.math.u-bordeaux.fr}.

\bibitem{Magma}
W.~Bosma, J.~Cannon, and C.~Playoust.
\newblock The {M}agma algebra system. {I}. {T}he user language.
\newblock {\em J. Symbolic Comput.}, 24(3-4):235--265, 1997.
\newblock Computational algebra and number theory (London, 1993).

\bibitem{BCLLMNO}
I.~Bouw, J.~Cooley, K.~Lauter, E.~Lorenzo Garc\'\i~a, M.~Manes, R.~Newton, and
  E.~Ozman.
\newblock Bad reduction of genus three curves with complex multiplication.
\newblock In {\em Women in numbers {E}urope}, volume~2 of {\em Assoc. Women
  Math. Ser.}, pages 109--151. Springer, Cham, 2015.

\bibitem{BouyerStreng}
F.~Bouyer and M.~Streng.
\newblock Examples of {CM} curves of genus two defined over the reflex field.
\newblock {\em LMS J. Comput. Math.}, 18(1):507--538, 2015.

\bibitem{broeker-stevenhagen-cm}
R.~Br{\"o}ker and P.~Stevenhagen.
\newblock Elliptic curves with a given number of points.
\newblock In {\em Algorithmic number theory}, volume 3076 of {\em Lecture Notes
  in Comput. Sci.}, pages 117--131. Springer, Berlin, 2004.

\bibitem{cmsv-article}
E.~Costa, N.~Mascot, J.~Sijsling, and J.~Voight.
\newblock Rigorous computation of the endomorphism ring of a {J}acobian.
\newblock \href{http://arxiv.org/abs/1705.09248}{\tt arXiv:1707.01158}, 2016.

\bibitem{DHBHS}
B.~Deconinck, M.~Heil, A.~Bobenko, M.~van Hoeij, and M.~Schmies.
\newblock Computing {R}iemann theta functions.
\newblock {\em Math. Comp.}, 73(247):1417--1442, 2004.

\bibitem{Deconinck}
B.~Deconinck and M.~van Hoeij.
\newblock Computing {R}iemann matrices of algebraic curves.
\newblock {\em Phys. D}, 152/153:28--46, 2001.
\newblock Advances in nonlinear mathematics and science.

\bibitem{dixmier}
J.~Dixmier.
\newblock On the projective invariants of quartic plane curves.
\newblock {\em Adv. in Math.}, 64:279--304, 1987.

\bibitem{Dupont}
R.~Dupont.
\newblock {\em Moyenne arithm{\'e}tico-g{\'e}om{\'e}trique, suites de Borchardt
  et applications}.
\newblock PhD thesis, {\'E}cole polytechnique, Palaiseau, 2006.

\bibitem{elsenhans-good}
A.-S. Elsenhans.
\newblock Good models for cubic surfaces.
\newblock Preprint at
  \href{https://math.uni-paderborn.de/fileadmin/mathematik/AG-Computeralgebra/Preprints-elsenhans/red_5.pdf}{\tt
  https://math.uni-pader\-born.de/\-fileadmin/\-mathematik/\-AG-Com\-puter\-alge\-bra/\-Pre\-prints-elsen\-hans/\-red\_5.pdf}.

\bibitem{elsenhans}
A.-S. Elsenhans.
\newblock Explicit computations of invariants of plane quartic curves.
\newblock {\em J. Symbolic Comput.}, 68(part 2):109--115, 2015.

\bibitem{fiorentino}
A.~Fiorentino.
\newblock Weber's formula for the bitangents of a smooth plane quartic.
\newblock \href{http://arxiv.org/abs/1612.02049}{\tt arXiv:1612.02049}, 2016.

\bibitem{giko}
M.~Girard and D.~R. Kohel.
\newblock {Classification of genus 3 curves in special strata of the moduli
  space.}
\newblock {In Hess, F. (ed.) et al., Algorithmic number theory. 7th
  international symposium, ANTS-VII, Berlin, Germany, July 23--28, 2006.
  Proceedings. Berlin: Springer. Lecture Notes in Computer Science 4076,
  346-360.}, 2006.

\bibitem{GorenLauter}
E.~Z. Goren and K.~E. Lauter.
\newblock Class invariants for quartic {CM} fields.
\newblock {\em Ann. Inst. Fourier (Grenoble)}, 57(2):457--480, 2007.

\bibitem{grossharris}
B.~H. Gross and J.~Harris.
\newblock On some geometric constructions related to theta characteristics.
\newblock In {\em Contributions to automorphic forms, geometry, and number
  theory}, pages 279--311. Johns Hopkins Univ. Press, Baltimore, MD, 2004.

\bibitem{guardia}
J.~Gu{\`a}rdia.
\newblock On the {T}orelli problem and {J}acobian {N}ullwerte in genus three.
\newblock \href{http://arxiv.org/abs/0901.4376}{\tt arXiv:0901.4376}, 2009.

\bibitem{igusa1}
J.-I. Igusa.
\newblock {\em Theta functions}, volume 194 of {\em Grundlehren der
  mathematischen Wissenschaften}.
\newblock Springer-Verlag, New York, 1972.

\bibitem{kilicerthesis}
P.~K{\i}l{\i}\c{c}er.
\newblock {\em The {CM} class number one problem for curves}.
\newblock PhD thesis, Universiteit Leiden and Universit\'e de Bordeaux, 2016.

\bibitem{fullistofcurves}
P.~{K\i{}l\i{}\c{c}er}, H.~Labrande, R.~Lercier, C.~Ritzenthaler, J.~Sijsling,
  and M.~Streng.
\newblock {CM} plane quartics.
\newblock Available at
  \href{https://arxiv.org/src/1701.06489v3/anc/curves.txt}{\tt
  https://arxiv.org/src/1701.06489v3/anc/curves.txt}.

\bibitem{KLLNOS}
P.~{K\i{}l\i{}\c{c}er}, K.~Lauter, E.~Lorenzo~Garc\'ia, R.~Newton, E.~Ozman,
  and M.~Streng.
\newblock A bound on the primes of bad reduction for {CM} curves of genus 3.
\newblock \href{https://arxiv.org/abs/1609.05826}{\tt arXiv:1609.05826}, 2016.

\bibitem{KLS}
P.~{K\i{}l\i{}\c{c}er}, E.~Lorenzo~Garc\'ia, and M.~Streng.
\newblock Primes dividing invariants of {CM} {P}icard curves.
\newblock \href{http://arxiv.org/abs/1801.04682}{\tt arXiv:1801.04682}, 2018.

\bibitem{KS2016}
P.~K{\i}l{\i}\c{c}er and M.~Streng.
\newblock Rational {CM} points and class polynomials for genus three.
\newblock In preparation, 2016.

\bibitem{kowe}
K.~Koike and A.~Weng.
\newblock Construction of {CM} {P}icard curves.
\newblock {\em Math. Comp.}, 74(249):499--518 (electronic), 2005.

\bibitem{koizumi}
S.~Koizumi.
\newblock The fields of moduli for polarized abelian varieties and for curves.
\newblock {\em Nagoya Math. J.}, 48:37--55, 1972.

\bibitem{ourimplementation}
H.~Labrande.
\newblock Calculating theta functions in genus {$3$}.
\newblock Package available at
  \href{http://hlabrande.fr/pubs/fastthetaconstantsgenus3.m}{\tt
  http://hlabrande.fr/pubs/fast\-thetaconstantsgenus3.m}.

\bibitem{Labrande}
H.~Labrande.
\newblock {\em {Explicit computation of the Abel-Jacobi map and its inverse}}.
\newblock PhD thesis, {Universit{\'e} de Lorraine and University of Calgary},
  Nov. 2016.

\bibitem{labrande-thome}
H.~Labrande and E.~Thom\'e.
\newblock Computing theta functions in quasi-linear time in genus two and
  above.
\newblock {\em LMS J. Comput. Math.}, 19(suppl. A):163--177, 2016.

\bibitem{LangCM}
S.~Lang.
\newblock {\em Complex multiplication}, volume 255 of {\em Grundlehren der
  Mathematischen Wissenschaften}.
\newblock Springer-Verlag, New York, 1983.

\bibitem{LarioSomoza}
J.-C. Lario and A.~Somoza.
\newblock A note on {P}icard curves of {CM}-type.
\newblock \href{http://arxiv.org/abs/1611.02582}{\tt arXiv:1611.02582}, 2016.

\bibitem{lauter}
K.~Lauter.
\newblock Geometric methods for improving the upper bounds on the number of
  rational points on algebraic curves over finite fields.
\newblock {\em J. Algebraic Geom.}, 10(1):19--36, 2001.
\newblock With an appendix by J.-P. Serre.

\bibitem{viray}
K.~Lauter and B.~Viray.
\newblock An arithmetic intersection formula for denominators of {I}gusa class
  polynomials.
\newblock {\em Amer. J. Math.}, 137(2):497--533, 2015.

\bibitem{LR08}
R.~Lercier and C.~Ritzenthaler.
\newblock Invariants and reconstructions for genus $2$ curves in any
  characteristic, 2008.
\newblock Available in \textsc{Magma} 2.15 and later.

\bibitem{LR11}
R.~Lercier and C.~Ritzenthaler.
\newblock Hyperelliptic curves and their invariants: geometric, arithmetic and
  algorithmic aspects.
\newblock {\em J. Algebra}, 372:595--636, 2012.

\bibitem{LRRS14}
R.~Lercier, C.~Ritzenthaler, F.~Rovetta, and J.~Sijsling.
\newblock Parametrizing the moduli space of curves and applications to smooth
  plane quartics over finite fields.
\newblock {\em LMS J.\ Comput. Math.}, 17(suppl. A):128--147, 2014.

\bibitem{lrs}
R.~Lercier, C.~Ritzenthaler, and J.~Sijsling.
\newblock Fast computation of isomorphisms of hyperelliptic curves and explicit
  descent.
\newblock In E.~W. Howe and K.~S. Kedlaya, editors, {\em Proceedings of the
  Tenth Algorithmic Number Theory Symposium}, pages 463--486. Mathematical
  Sciences Publishers, 2012.

\bibitem{LRS-Desc}
R.~Lercier, C.~Ritzenthaler, and J.~Sijsling.
\newblock Explicit {G}alois obstruction and descent for hyperelliptic curves
  with tamely cyclic reduced automorphism group.
\newblock {\em Math. Comp.}, 85(300):2011--2045, 2016.

\bibitem{LRS16-Code}
R.~Lercier, C.~Ritzenthaler, and J.~Sijsling.
\newblock quartic\_reconstruction; a \textsc{Magma} package for reconstructing
  plane quartics from {D}ixmier-{O}hno invariants.
\newblock Package available at
  \href{https://github.com/JRSijsling/quartic\_reconstruction}{\tt
  https://github.com/\-JRSijsling/\-quar\-tic\_re\-con\-struc\-tion}, 2016.

\bibitem{LRS16}
R.~Lercier, C.~Ritzenthaler, and J.~Sijsling.
\newblock Reconstructing plane quartics from their invariants.
\newblock \href{http://arxiv.org/abs/1606.05594}{\tt arXiv:1606.05594}, 2016.

\bibitem{LLR17}
E.~Lorenzo~Garc\'ia, R.~Lercier, and C.~Ritzenthaler.
\newblock Reduction type of non-hyperelliptic genus 3 curves.
\newblock In preparation, 2017.

\bibitem{mestre}
J.-F. Mestre.
\newblock Construction de courbes de genre $2$ \`a partir de leurs modules.
\newblock In {\em Effective methods in algebraic geometry}, volume~94 of {\em
  Prog. Math.}, pages 313--334, Boston, 1991. Birk{\"a}user.

\bibitem{rauch}
H.~E. Rauch and H.~M. Farkas.
\newblock {\em Theta functions with applications to {R}iemann surfaces}.
\newblock The Williams\thinspace \&\thinspace Wilkins Co., Baltimore, Md.,
  1974.

\bibitem{agmri}
C.~Ritzenthaler.
\newblock Point counting on genus 3 non hyperelliptic curves.
\newblock In {\em Algorithmic number theory}, volume 3076 of {\em Lecture Notes
  in Comput. Sci.}, pages 379--394. Springer, Berlin, 2004.

\bibitem{SerreTate}
J.-P. Serre and J.~Tate.
\newblock Good reduction of abelian varieties.
\newblock {\em Ann. of Math. (2)}, 88:492--517, 1968.

\bibitem{shimura-moduli}
G.~Shimura.
\newblock On the field of rationality for an abelian variety.
\newblock {\em Nagoya Math. J.}, 45:167--178, 1971.

\bibitem{Shimura77}
G.~Shimura.
\newblock On abelian varieties with complex multiplication.
\newblock {\em Proc. London Math. Soc. (3)}, 34(1):65--86, 1977.

\bibitem{shimura61:_compl_abelian}
G.~Shimura and Y.~Taniyama.
\newblock Complex multiplication of {A}belian varieties and its applications to
  number theory.
\newblock {\em Publ. Math. Soc. Japan}, 6, 1961.

\bibitem{simon}
D.~Simon.
\newblock Solving quadratic equations using reduced unimodular quadratic forms.
\newblock Preprint available at
  \href{http://simond.users.lmno.cnrs.fr/qfsolve.gp}{\tt
  http://simond.users.lmno.cnrs.fr/\-qfsolve.gp}, 2013.

\bibitem{spallek}
A.-M. Spallek.
\newblock {\em Kurven vom {G}eschlecht 2 und ihre {A}nwendung in
  {P}ublik-{K}ey-{K}ryptosystemen}.
\newblock PhD thesis, Institut f{\"u}r {E}xperimentelle {M}athematik, Essen,
  1994.

\bibitem{Stoll2011}
M.~Stoll.
\newblock Reduction theory of point clusters in projective space.
\newblock {\em Groups Geom. Dyn.}, 5(2):553--565, 2011.

\bibitem{strengrecip}
M.~Streng.
\newblock Recip -- Repository of complex multiplication SageMath code (formerly
  Sage package for using {S}himura's reciprocity law for {S}iegel modular
  functions).
\newblock \href{http://www.math.leidenuniv.nl/~streng/recip}{\tt
  http://www.math.leidenuniv.nl/\-{$\sim$}streng/\-recip}.

\bibitem{streng}
M.~Streng.
\newblock Computing {I}gusa class polynomials.
\newblock {\em Math. Comp.}, 83(285):275--309, 2014.
\newblock \href{http://arxiv.org/abs/0903.4766}{\tt arXiv:0903.4766}.

\bibitem{tate2}
J.~Tate.
\newblock Classes d'isogénie des variétés abéliennes sur un corps fini
  (d'après {H}onda).
\newblock In {\em S{\'e}minaire {B}ourbaki 1968/69}, volume 179 of {\em Lecture
  Notes in Math.}, pages 95--110. Springer, Berlin, 1971.

\bibitem{TT91}
W.~Tautz, J.~Top, and A.~Verberkmoes.
\newblock {Explicit hyperelliptic curves with real multiplication and
  permutation polynomials}.
\newblock {\em Canad. J. Math.}, 43(5):1055--1064, 1991.

\bibitem{sage}
{The Sage Developers}.
\newblock {\em {S}ageMath, the {S}age {M}athematics {S}oftware {S}ystem
  ({V}ersion 7.4)}, 2016.
\newblock {\tt http://www.sagemath.org}.

\bibitem{VanRijnswou}
S.~M. van Rijnswou.
\newblock {\em Testing the equivalence of planar curves}.
\newblock PhD thesis, Technische Universiteit Eindhoven, Eindhoven, 2001.

\bibitem{wamelen}
P.~van Wamelen.
\newblock Examples of genus two {CM} curves defined over the rationals.
\newblock {\em Math. Comp.}, 68(225):307--320, 1999.

\bibitem{voight}
J.~Voight.
\newblock Identifying the matrix ring: algorithms for quaternion algebras and
  quadratic forms.
\newblock In {\em Quadratic and higher degree forms}, volume~31 of {\em Dev.
  Math.}, pages 255--298. Springer, New York, 2013.

\bibitem{weber}
H.~Weber.
\newblock {Theory of abelian functions of genus 3. (Theorie der Abel'schen
  Functionen vom Geschlecht 3.)}, 1876.

\bibitem{wenghypg3}
A.~Weng.
\newblock A class of hyperelliptic {CM}-curves of genus three.
\newblock {\em J. Ramanujan Math. Soc.}, 16(4):339--372, 2001.

\end{thebibliography}
\end{document}